\documentclass{article}
\usepackage[a4paper]{geometry}
\usepackage{graphicx}
\usepackage{amsmath,amssymb,amsfonts}
\usepackage{xcolor}
\usepackage{float}
\usepackage[shortlabels]{enumitem}
\usepackage{algorithm}
\usepackage{algorithmic}
\usepackage[backref]{hyperref}
\usepackage{listings}
\usepackage{multirow}
\usepackage{subcaption}
\usepackage{rotating}

\usepackage{amsthm}

\newtheorem{theorem}{Theorem}

\newtheorem{proposition}{Proposition}

\theoremstyle{definition}
\newtheorem{definition}{Definition}

\theoremstyle{remark}
\newtheorem{remark}{Remark}

\newcommand{\argmin}{\operatorname{arg\,min}}
\renewcommand{\algorithmicrequire}{\textbf{Input:}}

\begin{document}

\title{Latent Structural Categorical Matrix Completion with Application to Quasispecies Analysis
}

\author{
Qian Zhang\thanks{Engineering Systems and Design, Singapore University of Technology and Design, Singapore (\texttt{qian\_zhang@sutd.edu.sg})}
\and
Meixia Lin\thanks{Institute of Statistics and Big Data, Renmin University of China, P.R. China (\texttt{lin\_meixia@ruc.edu.cn})}
}

\date{June 6, 2026}

\maketitle

\begin{abstract}
Matrix completion has been extensively studied for real-valued data, but existing methods are often limited in handling categorical variables.
We propose LCMC, a double-loop optimization framework for  categorical matrix completion via latent factorization based on a binary tensor representation. In this setting, each categorical entry is encoded as a one-hot vector along a third tensor mode, thereby preserving its discrete, non-ordinal nature. The outer loop adaptively estimates the latent dimension by iteratively updating it with feedback from the inner loop, while the inner loop reconstructs the categorical matrix through tensor factorization, supported by a corresponding theoretical analysis.
To further improve scalability and robustness, we introduce enhancements including a split-merge-refine strategy and an adaptive data reduction technique. Experiments on synthetic and real-world datasets in viral quasispecies reconstruction, demonstrate that LCMC achieves superior accuracy and efficiency compared to existing methods. 
\end{abstract}
\textbf{Keywords:} Categorical matrix completion; Structured factorization; Tensor representation

\section{Introduction}
\label{intro}
Matrix completion is a fundamental task in machine learning and signal processing that involves recovering a low-rank matrix from a subset of observed entries. It arises in a wide range of applications, including recommender systems, image inpainting, system identification, and bioinformatics. The core assumption is that the data matrix can be well approximated by a low-rank structure due to latent factors governing the data generation process. Given noisy and partial observations, a typical model is:
\begin{eqnarray}
	\min_{X \in \mathbb{R}^{n \times m}} &\ \left\{  \mathrm{rank}(X) \ \middle\vert \ |X_{ij} - R_{ij}| \leq \delta, \ (i,j) \in \Omega \right\},\label{eq: rank_matrix}
\end{eqnarray}
where $\Omega\subseteq [n] \times [m]$ denotes the set of observed indices, with the shorthand $[n] = \{1,\cdots,n\}$, $R\in\mathbb{R}^{n\times m}$ is the observed data matrix on $\Omega$, and $\delta \geq 0$ controls the approximation tolerance. When an upper bound of the rank, denoted as $k$, is known or can be estimated, problem \eqref{eq: rank_matrix} is often reformulated as:
\begin{eqnarray}
	\min_{X \in \mathbb{R}^{n \times m}} &\ \left\{  \frac{1}{2} \| P_{\Omega}(X - R) \|_F^2\  \middle \vert \  \mathrm{rank}(X) \leq k\right\},\label{eq: rank_matrix_upper}
\end{eqnarray}
where $P_{\Omega}: \mathbb{R}^{n \times m}\to \mathbb{R}^{n \times m} $ is the projection operator defined by
\begin{eqnarray}\label{eq:def_POmega}
	(P_{\Omega}(X))_{ij} = \left\{\begin{array}{ll} 
		X_{ij}, & \text{ if } (i,j)\in\Omega, \\
		0, &\text{ otherwise},
	\end{array} \right. \quad \forall \ (i,j)\in[n]\times[m].
\end{eqnarray}
Convex relaxations on problem \eqref{eq: rank_matrix_upper} 
have made low-rank matrix completion tractable with theoretical guarantees under mild conditions \cite{candes2012exact}, leading to a variety of algorithms including the singular value thresholding method \cite{cai2010singular}, the alternating minimization method \cite{jain2013low}, and the alternating direction method of multipliers \cite{lin2011linearized}, as well as robustness extensions for handling noise and outliers \cite{candes2011robust}. More recently, nonconvex reformulations of \eqref{eq: rank_matrix_upper} that factorize the matrix as $X=UV$ with $U\in\mathbb{R}^{n\times k}$ and $V\in\mathbb{R}^{k\times m}$ have been explored, giving rise to the unconstrained optimization problem:
\begin{eqnarray*}
	\min_{U\in\mathbb{R}^{n\times k}, V\in\mathbb{R}^{k\times m}} &\quad \frac{1}{2} \| P_{\Omega}(U V - R) \|_F^2 .
\end{eqnarray*}
Although nonconvex, this formulation has been shown to admit no spurious local minima under suitable conditions, allowing efficient recovery via gradient-based methods \cite{ge2016matrix,zheng2016convergence}. Optimization on matrix manifolds further provides a principled and efficient framework for such problem \cite{boumal2019global}.

In many real-world applications, matrix entries are categorical rather than continuous, reflecting inherent structural properties such as user preferences, genotype markers, or survey responses. A well-known example is the Netflix Prize competition, which aims to predict user ratings for movies based on a partially observed user-movie rating matrix, where entries are integers from 1 to 5. Although these ratings are discrete, their ordinal nature allows conventional matrix completion methods, originally developed for continuous data, to be applied by treating the entries as real-valued and using post-processing techniques such as rounding to obtain final predictions. In contrast, some applications involve categorical and nominal data with no natural ordering. One key example arises in computational biology~\cite{posada2017recent}, where the goal is to reconstruct viral quasispecies from partially observed sequencing reads, each corresponding to a fragment of a viral genome composed of nucleotides from the categorical set \{A, C, G, T/U\}. Unlike rating data, these values are nominal, and directly applying real-valued matrix completion methods may obscure important structural properties, leading to suboptimal recovery. 

To address categorical matrix completion with non-ordinal data, one line of research introduces a link function that connects the observed categorical variables to an underlying continuous low-rank factor matrix, with the goal of recovering this latent matrix from the categorical observations.
These methods adopt loss functions suited to categorical variables, such as one-versus-all hinge or multinomial logistic losses, combined with low-rank regularization, and infer categories by decoding predicted scores. For example, generalized low-rank models specify loss functions for each data type, using classification losses for categorical entries while learning continuous low-rank factors \cite{udell2016generalized}. In the binary setting, 1-bit matrix completion models observations as Bernoulli samples whose success probabilities depend on a low-rank latent matrix via the logistic or probit link function, and estimates the matrix by maximizing the likelihood under low-rank constraints \cite{davenport20141}. Cao et al.~\cite{cao2015categorical} extend this framework to the multi-class case, proposing a likelihood-based model with nuclear norm regularization, from which discrete labels are inferred. Whereas, the work \cite{chen2020categorical} adopts a one-versus-rest formulation to estimate the probability distribution over possible categories for each missing entry. In addition, \cite{sun2021evaluation} proposes an alternative strategy based on voting, imputing missing values by aggregating information from observed entries.
Another direction of research tackles the problem by operating on the categorical matrix directly. For instance, in biological data analysis, several studies propose structured factorization approaches for categorical matrix completion with low latent dimension \cite{cai2016structured,hashemi2018sparse,ahn2018viral}. 
Specifically, these approaches encode each categorical entry of the observed data matrix as a one-hot vector in an additional tensor mode, and then decompose the tensor into a left assignment matrix and a right representative tensor. The involved tensor decomposition is often solved by alternating minimization with projected gradient (PG) steps as subsolvers. However, most existing works use only one PG update per iteration; although may be effective in practice, this approach can yield low-accuracy solutions and lacks convergence guarantees.

Despite substantial research efforts, categorical matrix completion remains fundamentally challenging. A major difficulty lies in observational noise, which can distort categorical entries and obscure the latent structural patterns. This issue is especially severe when low-frequency categories, such as minor haplotypes in viral populations, are of primary interest \cite{martinez2022sars,orton2015distinguishing,van2022general}. 
Existing factorization methods \cite{cai2016structured,hashemi2018sparse} often assume balanced category distributions, which may be reasonable for diploid or polyploid organisms but is invalid in more heterogeneous settings. Under such conditions, these methods tend to capture dominant categories effectively but struggle to recover sparse or rare signals, particularly with missing data and binary constraints. Another major challenge is the need to pre-specify the number of latent groups, a quantity that is typically unknown and must be inferred from the data. An incorrect estimate can lead to poor recovery of the underlying categorical structure. Finally, scalability poses a significant barrier in large-scale applications such as sequencing, recommendation, and social network analysis, where categorical matrices are both large and sparse. Solving the completion problem under these setting requires algorithmic innovations that preserve categorical structure while balancing computational efficiency, memory usage, and solution quality.

This work addresses the problem of latent structural  categorical matrix completion. 
Let 
\begin{eqnarray}
	\mathcal{A}^{n\times m}:=\left\{A \ \middle| \ A \text{ is an } n \times m \text{ matrix with entries from } l \text{ categories} \right\}\label{eq: def_A}
\end{eqnarray}
denote the set of $n\times m$ categorical matrices, where each entry is drawn from a finite set of $l$ possible labels. Given an unknown categorical matrix ${R}^*\in \mathcal{A}^{n \times m}$, we observe only a subset of its entries, possibly corrupted by noise:
\begin{eqnarray}\label{eq:def_R}
	R = P_{\Omega}({R}^*+E), 
\end{eqnarray}
where $\Omega \subseteq [n] \times [m]$ denotes a given set of observed indices, and $+E$ models potential label-flipping errors rather than standard numeric perturbations. The operator $P_{\Omega}$ retains observed entries and masks unobserved ones. For consistency, we adopt a unified notation $P_{\Omega}$ for both real-valued and categorical matrices: for real-valued matrices (as defined in \eqref{eq:def_POmega}), unobserved entries are set to zero, whereas in the categorical case, they are assigned a special symbol ``$\perp$". Accordingly, the space of partially observed categorical matrices from $\mathcal{A}^{n\times m}$ with index set $\Omega$ is defined as
\begin{eqnarray}\label{eq: def_AOmega}
	\mathcal{A}^{n \times m}_\Omega := 
	\left\{ 
	A \ \middle| \ 
	A_{ij} \text{ takes a label from } l \text{ categories} \quad\right. \\ \nonumber \left. \text{ if } (i,j) \in \Omega; \ 
	A_{ij} = \perp \text{otherwise} 
	\right\}.
\end{eqnarray}
Our framework differs in that the latent structure is imposed directly on the categorical data itself, rather than through an auxiliary continuous low-rank matrix. In particular, the latent structure corresponds to grouping fibers into latent classes, leading to discrete structural constraints. Moreover, the complexity of the latent dimension is not fixed a priori, but is adaptively determined by the proposed outer loop.

Our goal is to reconstruct $R^*$ from the partial, noisy observations $R$ by simultaneously recovering missing entries and correcting label-flip errors.
Similar to low-rank real-valued matrix completion, this requires estimating the unknown latent dimension $k$, which in the categorical setting represents the number of latent groups. 
To address this, we propose a double-loop approach to recover the categorical matrix ${R}^*$ with low latent dimension. 
In the outer loop, the number of latent groups $k$ is adaptively updated by a tailored algorithm that leverages intermediate results from the inner loop. 
The inner loop then factorizes ${R}$ for the given $k$ into an assignment matrix $U \in \{0,1\}^{n \times k}$, whose one-hot rows indicate latent group memberships, and a representative matrix $V \in \mathcal{A}^{k \times m}$, where each row specifies the categorical pattern of a latent group.
The proposed approach is as follows.

\vspace{0.5em}

\fbox{
	\parbox{0.9\linewidth}{
		\textbf{LCMC: Double-loop latent structural categorical matrix completion} \\
		\textbf{Outer loop (latent dimension estimation):} Iteratively update the estimated number of latent groups $k$ using feedback from the inner loop.\\
		\textbf{Inner loop (factorization):} Given latent dimension \(k\), factorize \(R\) into \(U \in \{0,1\}^{n\times k}\) and \(V \in \mathcal{A}^{k \times m}\) by solving
		\begin{eqnarray}\label{pr:categoricalMF}
			\min_{U\in \{0,1\}^{n\times k},\, V\in \mathcal{A}^{k \times m}} \mathcal{D}_{\Omega} \left(P_{\Omega}(UV),\ P_{\Omega}(R)\right).
		\end{eqnarray}
		\vspace{-0.3cm}
	}
} \\  
\vspace{0.3em}

\noindent Here, $\mathcal{D}_{\Omega}(\cdot,\cdot)$ is the restricted discrepancy measure for comparing categorical data on the observed index set $\Omega$. For any $M_1, M_2\in\mathcal{A}_{\Omega}^{n\times m}$, it is defined as
\begin{eqnarray}\label{eq:def_categorical_distance}
	\mathcal{D}_{\Omega}(M_1, M_2) := \sum_{(i,j)\in\Omega} \mathbf{1}_{\{(M_1)_{ij}\neq (M_2)_{ij}\}},
\end{eqnarray}
where the inequality $(M_1)_{ij}\neq (M_2)_{ij}$ denotes a mismatch between categorical labels, and $\mathbf{1}_{\{\cdot\}}$ is the indicator function, defined as $\mathbf{1}_{\{C\}}=1$ if the condition $C$ is true, and $0$ otherwise.

The main contributions of this work are summarized as follows:
\begin{itemize}[left=0.5em]
	\item We propose LCMC, a double-loop optimization approach for \underline{L}atent structural \underline{C}ategorical \underline{M}atrix \underline{C}ompletion. In the outer loop, we estimate the intrinsic dimension (i.e., the number of latent groups) using feedback from the inner loop, based on a statistical criterion. In the inner loop, we reformulate the categorical completion task with given latent dimension into a tensor representation and solve the resulting factorization problem, together with a convergence analysis.
    
	\item To improve scalability and robustness, we incorporate two algorithmic enhancements into LCMC, including a split-merge-refine strategy for handling large-scale, high-dimensional datasets, and an adaptive data reduction technique to mitigate issues arising from category imbalance and sparse observations.
	
	\item We evaluate our method on the viral quasispecies reconstruction problem, using both synthetic and real-world datasets. Experimental results show that LCMC outperforms the state-of-the-art method in reconstruction accuracy and computational efficiency, particularly in challenging settings with noise, category imbalance, and large-scale data.
\end{itemize}

The remainder of this paper is structured as follows. In Section \ref{sec: formulation}, we formulate the categorical matrix completion problem with a tensor-based representation. Section \ref{sec: BCMC} introduces our double-loop framework, LCMC, and presents its convergence analysis. In Section \ref{sect:enhanced}, we introduce enhanced techniques designed to improve scalability and robustness. Section \ref{sec: viral} presents numerical experiments on an application to the viral quasispecies reconstruction. Finally, Section \ref{sec: conclusion} concludes the paper and outlines the potential directions for future work.

\section{Problem formulation}
\label{sec: formulation}	
To solve the categorical matrix completion problem, we embed the categorical domain into a structured numerical form. Specifically, we adopt a tensor-based formulation where categorical values are encoded along an additional mode of a higher-order binary tensor, which preserves the discrete, non-ordinal nature of the data while providing a structured foundation for algorithms designed to operate effectively on categorical structures.

\subsection{Notations}\label{sec: notation}
We denote matrices using capital letters and tensors using calligraphic letters. For a matrix $R\in \mathbb{R}^{I_1\times I_2}$, $R_{ij}$ denotes its $(i,j)$-th entry,  
and $R_{i:}$ denotes its $i$-th row. 
For a tensor $\mathcal{R}\in \mathbb{R}^{I_1\times I_2\times I_3}$, $\mathcal{R}_{ijk}$ denotes its $(i,j,k)$-th entry, 
$\mathcal{R}_{ij:}$ denotes its mode-3 fiber indexed by $(i,j,:)$, and $\mathcal{R}_{i::}$ denotes its $i$-th horizontal slice indexed by $(i,:,:)$. 
We denote $\|x\| = \left(\sum_{i=1}^n x_i^2\right)^{1/2}$ for $x\in\mathbb{R}^n$, $\|X\|=\|X\|_F =  (\sum_{i=1}^{n_1} \sum_{j=1}^{n_2} X_{ij}^2) ^{1/2}$ for $X\in\mathbb{R}^{n_1\times n_2}$, and $\|\mathcal{X}\| = ( \sum_{i=1}^{n_1} \sum_{j=1}^{n_2} \sum_{k=1}^{n_3} \mathcal{X}_{ijk}^2 )^{1/2}$ for $\mathcal{X}\in\mathbb{R}^{n_1\times n_2 \times n_3}$.
Let $\mathbf{0}$ denote the zero vector, with dimension inferred from context.
Define \( e_i^d \) as the row vector with a \( 1 \) in the \( i \)-th position and \( 0 \)s elsewhere;
the superscript \( d \) is omitted when clear from context.
Define $[n] := \{ 1,  \ldots, n \}$. 
For $\Omega \subseteq [n]\times [m]$, let $|\Omega|$ be its cardinality, $\Omega^{c} := \{(i,j)\ |\ (i,j)\notin \Omega,\ i \in [n],\ j\in [m]\}$ be its complement, and $\Omega_i := \{j \in [m] \ |\ (i,j)\in\Omega\}$ for each $i\in[n]$. 

\subsection{Binary tensor representation}
We reformulate problem~\eqref{pr:categoricalMF} by mapping the categorical matrices into binary tensor representations.
Let $M\in\mathcal{A}^{n\times m}$, where $\mathcal{A}^{n\times m}$ is the set of categorical matrices defined in  \eqref{eq: def_A}. Without loss of generality, we index the $l$ categories involved in $\mathcal{A}^{n\times m}$ by $\{1,2,\ldots,l\}$. The categorical information in $M$ can be equivalently represented by a binary tensor $\mathcal{M}\in\{0,1\}^{n\times m\times l}$ as follows.  
For each entry $M_{ij}$, the corresponding fiber $\mathcal{M}_{ij:}\in\{0,1\}^{l}$ is a one-hot vector encoding the category of $M_{ij}$, that is,
\begin{eqnarray*}
	\mathcal{M}_{ijq} = \left\{
	\begin{aligned}
		1, &\quad \mbox{if } M_{ij} \mbox{ belongs to category } q, \\
		0, & \quad \mbox{otherwise},
	\end{aligned}\right.
	\quad \forall i\in[n], \, j\in[m], \, q\in[l].
\end{eqnarray*}
For $M\in \mathcal{A}_{\Omega}^{n\times m}$, where $\mathcal{A}_{\Omega}^{n\times m}$ is the set of partially observed categorical matrices defined in \eqref{eq: def_AOmega}, we define its tensor representation $\mathcal{M}\in\{0,1\}^{n\times m\times l}$ by setting
$\mathcal{M}_{ij:}$ as above for $(i,j) \in \Omega$, and assigning $\mathcal{M}_{ijq}=0$ for all $q\in[l]$ whenever $(i,j)\notin\Omega$. For $M_1, M_2\in\mathcal{A}_{\Omega}^{n\times m}$, with corresponding tensor respresentations $\mathcal{M}_1,\ \mathcal{M}_2\in\{0,1\}^{n\times m\times l}$, the restricted distance defined in \eqref{eq:def_categorical_distance} can be equivalently computed in tensor form as
$$
\mathcal{D}_\Omega(M_1, M_2) = \frac{1}{2} \sum_{(i,j)\in\Omega} \|(\mathcal{M}_1)_{ij:} - (\mathcal{M}_2)_{ij:}\|^2 = \frac{1}{2} \|P_{\Omega}(\mathcal{M}_1 - \mathcal{M}_2)\|^2,
$$ 
where, in line with the earlier definitions of $P_\Omega$ for matrices, we extend the operator to third-order tensors in a consistent manner: for a tensor $\mathcal{M}\in \mathbb{R}^{n\times m\times l}$, $P_\Omega(\mathcal{M})$ acts on each $(i,j)$-indexed fiber $\mathcal{M}_{ij:}$ by retaining it if $(i,j) \in \Omega$, and setting it to zero otherwise.

Given the number of latent groups $k$, let $\mathcal{R}\in \{0,1\}^{n\times m\times l}$ and $\mathcal{V}\in \{0,1\}^{k\times m\times l}$ denote the tensor representations of the categorical matrices $R\in\mathcal{A}^{n\times m}_{\Omega}$ and $V\in\mathcal{A}^{k\times m}$. Problem~\eqref{pr:categoricalMF} is equivalently reformulated as  
\begin{align}
	\min_{U \in \mathcal{F}_U,\ \mathcal{V} \in \mathcal{F}_{\mathcal{V}}} &\quad \left\{ f(U, \mathcal{V}):=\frac 1 2 \|P_{\Omega} (\mathcal{R} - U \mathcal{V})\|^2\right\} .  \label{pr:factorization} \tag{P0} 
\end{align}
Here, $\mathcal{F}_\mathcal{V}$ denotes the set of representative tensors, where each slice $\mathcal{V}_{p::} \in \{0,1\}^{m \times l}$ encodes the categorical pattern of group $p$: 
\begin{eqnarray*}
	\mathcal{F}_\mathcal{V}:=\ \left\{ \mathcal{V}\in \{0,1\}^{k\times m \times l}\ \middle \vert\ \sum\nolimits_{q=1}^l \mathcal{V}_{p jq} = 1, \ p \in [k], \ j\in [m] \right\}, 
\end{eqnarray*}
and $\mathcal{F}_U$ denotes the set of assignment matrices with one-hot rows indicating latent group membership:
$$\mathcal{F}_U:=\ \left\{ \left.U \in \{0,1\}^{n\times k}\ \right|\  U_{i:}\in\{e_1, e_2, \ldots, e_k\}, \ i\in [n]  \right\}.$$
	
	\section{LCMC: Double-loop \underline{L}atent Structural \underline{C}ategorical \underline{M}atrix \underline{C}ompletion}
	\label{sec: BCMC}
	In this section, we present our double-loop approach for latent structural categorical matrix completion. 
	In the outer loop, we employ a binary search approach using feedback from the inner loop, guided by a variance-based criterion, to iteratively learn the latent dimension $k$. In the inner loop, given a fixed $k$, we reconstruct the categorical matrix by a tailored tensor factorization method for solving \eqref{pr:factorization}. 
	    
	\subsection{Outer loop: Binary search for latent dimension $k$}\label{sec: upper_level}
	We quantify the deviation between the observed data and a learned tensor representation using a sum of squared error (SSE) criterion restricted to the observed index set $\Omega$. Specifically, for latent dimension $k$, we define
	$${\rm SSE}_k := \|P_{\Omega} (\mathcal{R} - U(k) \mathcal{V}(k))\|^2,$$ where $(U(k),\mathcal{V}(k))$ denotes a solution to the completion problem~\eqref{pr:factorization} with fixed latent dimension $k$. As the latent dimension $k$ increases, ${\rm SSE}_k$ is non-increasing because higher latent dimensional models can better fit the observed data. Therefore, using SSE alone is insufficient to determine the optimal latent dimension, as it may continue to decrease without indicating when overfitting begins.
	To address this, we measure the relative improvement in fit when increasing the latent dimension from $k$ to $k+1$ via the SSE ratio:
	\begin{equation}\label{eq:improvement}
		\rho_k = {\rm SSE}_{k+1}/{\rm SSE}_k.
	\end{equation}
	Figure \ref{fig:upper_level_sse} illustrates how ${\rm SSE}_k$ (blue) and $\rho_k$ (red) evolves as functions of $k$. While SSE steadily decreases with $k$, the ratio $\rho_k$ captures the marginal gain from increasing model complexity. 
	A ratio close to 1 indicates that increasing $k$ results in diminishing improvements. This trend can be used to determine an optimal $k$ by identifying the smallest $k$ for which $\rho_k$ starts to stabilize, suggesting that further increasing $k$ does not significantly improves the performance.
	%\vspace*{-0.5cm}
    \begin{figure}[H]
		\centering	\includegraphics[width=0.45\linewidth]{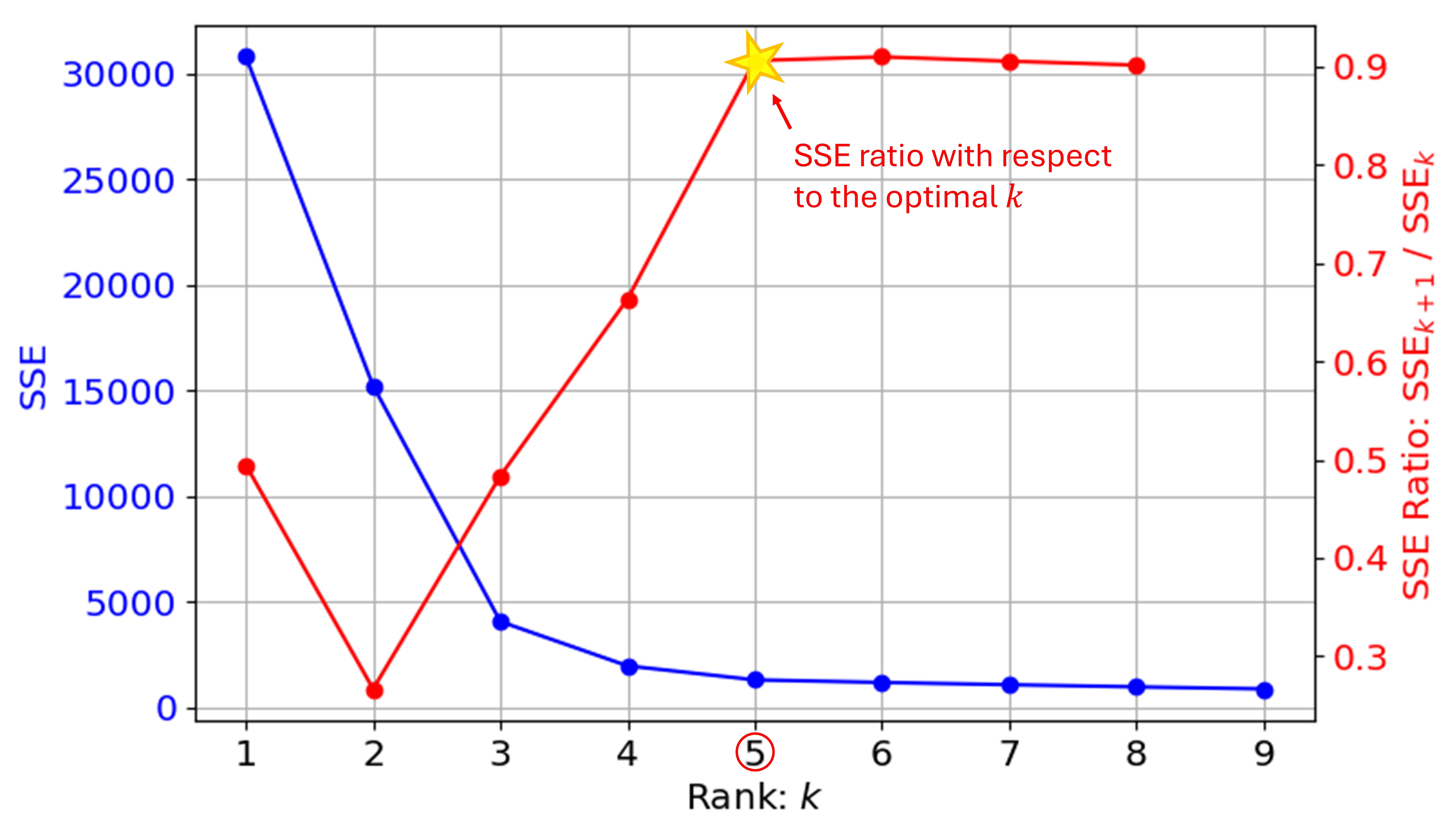}
		\caption{ 
			Illustration of 
			SSE (blue) and its ratio (red) versus $k$ (true latent dimension $\bar{k}\!=\!5$).}
		\label{fig:upper_level_sse}
	\end{figure}
	%\vspace*{-0.5cm}
	To efficiently determine the optimal latent dimension $k$, we apply a binary search strategy based on SSE ratio $\rho_k$ defined in \eqref{eq:improvement}, by iterative halving the search space until the optimal value is identified.
	At each step, $\rho_k$ at the midpoint is evaluated and the interval is updated based on whether $\rho_k > 1 - \epsilon$ for a prescribed threshold $\epsilon > 0$ (in our experiments, we set $\epsilon=10^{-2}$). The procedure terminates when the interval converges, as detailed in Algorithm~\ref{alg:BCMC}.
	%\vspace*{-0.5cm}
	\begin{algorithm}[!h]
		\caption{(Outer loop) Binary search for latent dimension $k$: $( U(k),\mathcal{V}(k)) \gets$ \textbf{LCMC}($\mathcal{R},\Omega$) }
		\label{alg:BCMC}
		\begin{algorithmic}[1]
			\REQUIRE  Data tensor $\mathcal{R}\in \{0,1\}^{n\times m\times l}$, observed index set $\Omega\subseteq [n]\times [m]$. 
			\renewcommand{\algorithmicrequire}{\textbf{Initialization:}} 
			\REQUIRE Set tolerance $\epsilon>0$, $[k_{\min}, k_{\max}]=[1, n]$, Flag $= 0$. Initialize some $k\in[1, n]$.
			\WHILE{$k_{\max} - k_{\min} \neq 1$}
			% \FOR{$k=K,K+1$}
			\STATE Apply \textbf{TRC} (Algorithm \ref{alg:alt_min}) to compute: 
			\vspace{-5pt}
			\[
			( U(k),\mathcal{V}(k) ) \gets \textbf{TRC} (\mathcal{R}, \Omega, k),\quad (U(k+1),\mathcal{V}(k+1)) \gets \textbf{TRC} (\mathcal{R}, \Omega, k+1),
			\]
			\vspace{-18pt}
			\STATE Compute the improvement $\rho_k$ by \eqref{eq:improvement}.
			\IF{$\rho_k > 1 - \epsilon$} 
			\STATE Flag $= 1$. Update $k_{\max} = k$, and $k = \lfloor (k_{\min} + k_{\max})/2 \rfloor$. 
			\ELSE
			\STATE \textbf{if} Flag \textbf{then} $k_{\min} \gets k$, $k \gets \lfloor (k_{\min} + k_{\max})/2 \rfloor$ \textbf{else} $k \gets 2k$ \textbf{end if}
			\ENDIF
			\ENDWHILE
			\ENSURE Representative tensor $\mathcal{V}(k)$ and assignment matrix $U(k)$.  
		\end{algorithmic}  
	\end{algorithm}	
	%\vspace{-0.5cm}

    \begin{remark}
    The proposed outer loop algorithm estimates the latent dimensionality $k$ using a strategy inspired by the elbow method (see, e.g., \cite{tibshirani2001estimating}). Rather than performing a standard exhaustive search over all candidates $k \in \{1,\dots,k_{\max}\}$, it incorporates a binary search scheme, which reduces the number of required evaluations to $O(\log k_{\max})$ and thus substantially lowers the computational cost. 
    However, as in other elbow-based approaches, the selection may become less distinguishable when the SSE ratio decays smoothly without a pronounced elbow, particularly in high-noise settings. In such cases, more principled information-criterion-based \cite{hung2022generalized} or discrepancy-based \cite{li2026robust} model selection methods may be adopted at the cost of increased computation.
    \end{remark}
      
	%------------------------------------------------------------------------------------
	\subsection{Inner loop: Latent structural tensor completion}
	In this subsection, we address the tensor completion problem \eqref{pr:factorization} with a prescribed latent dimension $k$. 
	Note that since both $U$ and $\mathcal{V}$ are subject to discrete constraints, problem \eqref{pr:factorization} falls into the class of integer programming problems, which are generally intractable to solve globally.
	To address this challenge, a common strategy is to relax the discrete constraints to their convex hull \cite{ahn2018viral,hashemi2018sparse}, thereby yielding a convex program whose solution provides a lower bound to the original problem. 
	
	Motivated by this idea, we adopt a partial relaxation framework for solving \eqref{pr:factorization}.
	Specifically, we relax the discrete constraint on $\mathcal{V}$ by replacing the binary domain $\{0,1\}^{k\times m\times l}$ with its convex hull, allowing $\mathcal{V}\in [0,1]^{k\times m\times l}$.
	This relaxation enables each mode-3 fiber of $\mathcal{V}$  to represent a probability distribution over the $l$ possible categories, rather than a single hard assignment. This probabilistic formulation captures uncertainty and provides a more flexible representation of the underlying categorical structure. As a result, $\mathcal{V}$ is now constrained to lie within the convex hull of the original feasible set $\mathcal{F}_{\mathcal{V}}$, defined as 
	\begin{align}
		\bar{\mathcal{F}}_\mathcal{V}:= \left\{ \mathcal{V}\in [0,1]^{k\times m \times l}\ \middle\vert\ \sum_{q=1}^t \mathcal{V}_{p jq} = 1, \ p \in [k], \ j\in [m] \right\}.\nonumber 
	\end{align}
	We retain the constraint $U \in \mathcal{F}_U$, which enforces that each data point belongs to a single latent group while preserving the convergence guarantees of our proposed algorithm.
	The resulting partially relaxed problem of \eqref{pr:factorization} is formulated as:
	\begin{align}
		\min_{U\in \mathcal{F}_U,\; \mathcal{V} \in \bar{\mathcal{F}}_\mathcal{V}} \;  f(U, \mathcal{V}). 
		\label{pr:mec2} \tag{P1} 
	\end{align}
	After obtaining a solution $(U,\mathcal{V})$ to \eqref{pr:mec2}, we obtain a feasible solution to the original problem \eqref{pr:factorization} by projecting the relaxed tensor $\mathcal{V}$ onto the discrete feasible set $\mathcal{F}_{\mathcal{V}}$, while keeping $U$ unchanged. Specifically, the projection of a tensor $\mathcal{V}\in\mathbb{R}^{k\times m\times l}$ onto the set $\mathcal{F}_\mathcal{V}$, denoted by $\operatorname{Proj}_{\mathcal{F}_\mathcal{V}}(\mathcal{V})$, can be computed as:
	\begin{equation*}
		\left(\operatorname{Proj}_{\mathcal{F}_\mathcal{V}}(\mathcal{V})\right)_{pj:}=e_{q^*},\; \textrm{where}\ q^* = \underset{q\in[l]}{\arg\max}\ \mathcal{V}_{pjq},
		\quad \text{for all } p \in [k], \ j\in [m].
	\end{equation*}
	We now introduce an efficient iterative algorithm to solve the relaxed problem~\eqref{pr:mec2}, and thereby addressing the original tensor completion task~\eqref{pr:factorization}. The algorithm alternates structured updates of $U$ and $\mathcal{V}$ while respecting their respective constraints, see Algorithm~\ref{alg:alt_min}.
	
	\begin{algorithm}[H]
		\caption{(Inner loop) Latent structural tensor completion:  $(U,\mathcal{V}) \gets$ \textbf{TRC}($\mathcal{R}, \Omega, k$)}\label{alg:alt_min}
		\begin{algorithmic}[1]
			\REQUIRE Data tensor $\mathcal{R}$, observed index set $\Omega$ and latent dimension $k$. Set $t=0$. Choose tolerance $\eta>0$ (in our experiments, we set $\eta=10^{-5}$). 
			\STATE Compute the initialization:
			\begin{eqnarray}
				\mathcal{V}^0 = \operatorname{Proj}_{\mathcal{F}_\mathcal{V}} (\operatorname{SVD}({P}_{\Omega}(\mathcal{R}), k)) \quad \text{and} \quad
				U^{0} {=}\ \underset{U\in{\mathcal{F}}_{U} }{\argmin} \left\{ %\left.  
				\frac{1}{2} \left\|{P}_{\Omega} \left(\mathcal{R} - {U} \mathcal{V}^{0} \right) \right\|^2  
				\right\}. 
			\end{eqnarray}
			where $\operatorname{SVD}(\mathcal{V}, k)$ denotes the tensor obtained by reshaping the right singular vector matrix from the rank-$k$ truncated SVD of matricization of tensor $\mathcal{V}$.
			\REPEAT
			\STATE 
			Update the representative tensor $\mathcal{V}^{t}$ by 
			\begin{eqnarray}
				\mathcal{V}^{t} =\ \underset{\mathcal{V}\in\mathbb{R}^{k\times m\times l}}{\argmin} 
				\left\{	\hat{f}_t (\mathcal{V}):= \frac 1 2 \left\|\hat{\mathcal{R}}^t - {U}^{t-1}\mathcal{V} \right\|^2 \right\}, \label{pr:ALS-V}
			\end{eqnarray}
			where \begin{eqnarray}\label{eq:imputingR}
				\hat{\mathcal{R}}^{t} := {P}_{\Omega} \left(\mathcal{R} \right) + P_{\Omega^c}   
				\left( {U}^{t-1} \mathcal{V}^{t-1} \right) .
			\end{eqnarray}
			\STATE 
			Compute the assignment matrix ${U}^{t}$ by 
			\begin{eqnarray}
				U^{t} {=}\ \underset{U\in{\mathcal{F}}_{U} }{\argmin} \left\{ %\left.  
				\frac{1}{2} \left\|{P}_{\Omega} \left(\mathcal{R} - {U} \mathcal{V}^{t} \right) \right\|^2 \right\}. \label{pr:ALS-sub-U}
			\end{eqnarray}
			\STATE $t\gets t+1$.
			\UNTIL{%convergence.
				$$\min\left\{ \left\| \mathcal{V}^t - \mathcal{V}^{t-1}\right\| , f(U^t, \mathcal{V}^t), \frac{\left|f(U^t, \mathcal{V}^t) - f(U^{t-1}, \mathcal{V}^{t-1}) \right|}{\max\{1, f(U^{t-1}, \mathcal{V}^{t-1})\}} \right\} \leq \eta.$$
			}
			\STATE Project the tensor $\mathcal{V}^t$ onto the set $\mathcal{F}_\mathcal{V}$: $\mathcal{V}^t = \operatorname{Proj}_{\mathcal{F}_\mathcal{V}}(\mathcal{V}^t)$. 
			\ENSURE Representative tensor $\mathcal{V} = \mathcal{V}^{t}$ and assignment matrix $U=U^t$.
		\end{algorithmic}  
	\end{algorithm}
	
	An appealing property of Algorithm~\ref{alg:alt_min} is that each subproblem admits a closed-form solution, allowing for efficient computation. In particular, each update can be carried out in $\mathcal{O}(nmlk)$ time. These closed-form solutions are summarized in the following proposition.
	
	\begin{proposition}\label{prop:sol_V}
		For each iteration $t$, both subproblems \eqref{pr:ALS-V} and \eqref{pr:ALS-sub-U} in Algorithm \ref{alg:alt_min} 
		admit closed-form solutions as follows.
		\begin{itemize}[leftmargin=2em]
			\item[(i)] Define the index set $J_p:= \left\{i\in[n]\left| ({U}^{t-1})_{i:} = e_p \right. \right\}$ for each $p\in[k]$. A valid solution $\mathcal{V}^t$ to problem \eqref{pr:ALS-V} is given by
			\begin{equation}\label{eq:updateV_avg}
				({\mathcal{V}}^t)_{p::}  = \left\{\begin{array}{ll}
					\frac{1}{|J_p|}\sum_{i\in J_p} (\hat{\mathcal{R}}^t)_{i::} & \text{if }\ J_p\neq \emptyset, \\[4pt]
					({\mathcal{V}}^{t-1})_{p::} & \text{if }\ J_p =  \emptyset,
				\end{array}\right.  \quad \text{for } p\in[k].
			\end{equation}
			Moreover, the above tensor $\mathcal{V}^t$ satisfies $\mathcal{V}^t\in \bar{\mathcal{F}}_{\mathcal{V}}$. 
			\item[(ii)] The solution $U^t$ to problem~\eqref{pr:ALS-sub-U} is:
			\begin{equation}\label{eq:def_assignment_U}
				(U^t)_{i:}=e_{p^*} \quad \textrm{with} \quad p^* \in  \underset{p\in[k]}{\arg\min}\ \|
				P_{\Omega_i}(\mathcal{R}_{i::}-\mathcal{V}^t_{p::})\|^2\ , \quad \textrm{ for } \; i\in [n],  
			\end{equation}
			where $\Omega_i=\{j\in[m]\ |\ (i,j)\!\in\!\Omega\}$. If multiple minimizers exist, the smallest one is selected. 
		\end{itemize}
	\end{proposition}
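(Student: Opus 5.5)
Both subproblems decouple, so I will prove each closed form by splitting the objective into independent pieces and solving each piece exactly.

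For part (i), the key observation is that $U^{t-1}\in\mathcal{F}_U$ has one-hot rows. Hence $(U^{t-1}\mathcal{V})_{i::}=\mathcal{V}_{p::}$ whenever $i\in J_p$, and the sets $J_1,\dots,J_k$ partition $[n]$. The objective therefore separates by group:
\begin{equation*}
\hat f_t(\mathcal{V})=\frac12\sum_{p=1}^k\sum_{i\in J_p}\big\|(\hat{\mathcal{R}}^t)_{i::}-\mathcal{V}_{p::}\big\|^2 .
\end{equation*}
Each slice $\mathcal{V}_{p::}$ appears in exactly one group term, which is a strictly convex quadratic when $J_p\neq\emptyset$. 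Its unique minimizer is the average of the $(\hat{\mathcal{R}}^t)_{i::}$ over $J_p$, found by setting the gradient $\sum_{i\in J_p}(\mathcal{V}_{p::}-(\hat{\mathcal{R}}^t)_{i::})$ to zero. When $J_p=\emptyset$, $\mathcal{V}_{p::}$ does not enter the objective, so any value is optimal, and in particular $(\mathcal{V}^{t-1})_{p::}$ is. This yields \eqref{eq:updateV_avg}.

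For the membership claim $\mathcal{V}^t\in\bar{\mathcal{F}}_\mathcal{V}$, I will argue by induction on $t$. The base case holds because $\mathcal{V}^0$ is a projection onto $\mathcal{F}_\mathcal{V}\subseteq\bar{\mathcal{F}}_\mathcal{V}$. For the inductive step, suppose $\mathcal{V}^{t-1}\in\bar{\mathcal{F}}_\mathcal{V}$ and consider the fibers of $\hat{\mathcal{R}}^t$. If $(i,j)\in\Omega$, the fiber equals $\mathcal{R}_{ij:}$, a one-hot vector. If $(i,j)\notin\Omega$, the fiber equals $(U^{t-1}\mathcal{V}^{t-1})_{ij:}=\mathcal{V}^{t-1}_{pj:}$ for the $p$ with $i\in J_p$. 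In both cases the fiber lies in the probability simplex $\{x\in[0,1]^l:\sum_q x_q=1\}$. The simplex is convex, so the average of such fibers over $J_p$ stays in it. Slices copied from $\mathcal{V}^{t-1}$ remain feasible by the inductive hypothesis. One bookkeeping detail needs care: the hypothesis must cover the $\mathcal{V}^{t-1}$ that is actually used, which is the relaxed iterate rather than its projection. This is consistent because the projection in Algorithm~\ref{alg:alt_min} is applied only after the loop terminates. The definition of $\bar{\mathcal{F}}_\mathcal{V}$ writes the sum up to $t$, which I read as a typo for $l$.

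For part (ii), the $U$-subproblem separates across rows in the same way:
\begin{equation*}
\tfrac12\|P_\Omega(\mathcal{R}-U\mathcal{V}^t)\|^2=\tfrac12\sum_{i=1}^n\|P_{\Omega_i}(\mathcal{R}_{i::}-(U\mathcal{V}^t)_{i::})\|^2 .
\end{equation*}
Setting $U_{i:}=e_p$ makes $(U\mathcal{V}^t)_{i::}=\mathcal{V}^t_{p::}$. Each row therefore independently selects the $p$ that minimizes $\|P_{\Omega_i}(\mathcal{R}_{i::}-\mathcal{V}^t_{p::})\|^2$ over a finite set. A minimizer always exists, and taking the smallest index breaks ties to give the stated solution. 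Because the row choices are unconstrained by one another, the resulting $U^t$ is globally optimal over $\mathcal{F}_U$.

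The only step needing genuine care is the feasibility induction in part (i), specifically the imputation argument for fibers indexed by $\Omega^c$. Everything else is routine separability.
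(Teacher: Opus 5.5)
Your proposal is correct and follows essentially the same route as the paper: slice-wise separability of $\hat f_t$ with the averaging minimizer (any value, e.g.\ $(\mathcal{V}^{t-1})_{p::}$, when $J_p=\emptyset$), then an induction showing every fiber of $\hat{\mathcal{R}}^t$ lies in the probability simplex, so the slice averages stay in $\bar{\mathcal{F}}_{\mathcal{V}}$, and finally row-wise separability for the $U$-subproblem. Your extra remarks are accurate refinements that the paper leaves implicit: the induction concerns the unprojected iterates, since $\operatorname{Proj}_{\mathcal{F}_\mathcal{V}}$ is applied only after the loop, and the upper summation index $t$ in $\bar{\mathcal{F}}_{\mathcal{V}}$ should read $l$.
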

	\begin{proof}
		Note that the $\mathcal{V}$-subproblem \eqref{pr:ALS-V} is separable across horizontal slices. Thus, the minimization problem in \eqref{pr:ALS-V} can be equivalently written as
		\begin{equation}
			\min_{\mathcal{V}\in \mathbb{R}^{k\times m \times l}}  
			\left\{  
			\frac 1 2 \sum_{p=1}^k   \sum_{i\in J_p} \left\|(\hat{\mathcal{R}}^t)_{i::} - \mathcal{V}_{p::} \right\|^2   \right\}.\label{eq: new_Vsub}
		\end{equation}
		The problem~\eqref{eq: new_Vsub} is convex, so any point at which the gradient vanishes is a global minimizer.
		It is easy to verify that the gradient vanishes at $\mathcal{V}^t$ defined in~\eqref{eq:updateV_avg}, and hence it yields a valid solution.
		Next, we verify that ${\mathcal{V}}^t\in \bar{\mathcal{F}}_{\mathcal{V}}$. Since the initialization satisfies ${\mathcal{V}}^0 \in \bar{\mathcal{F}}_{\mathcal{V}}$, we proceed by induction. Suppose ${\mathcal{V}}^{t-1} \in  \bar{\mathcal{F}}_{\mathcal{V}}$. For any $i\in [n]$, based on the definition of $\hat{\mathcal{R}}^t$ in \eqref{eq:imputingR} and the fact that $({U}^{t-1})_{i:} = e_p$ for some $p\in[k]$, we have 
		\begin{align*}
			(\hat{\mathcal{R}}^t)_{ij:} = 
			\left\{ \begin{aligned}
				& \mathcal{R}_{ij:}, \quad &\textrm{if } j \in\Omega_i, \\
				& ({\mathcal{V}}^{t-1})_{pj :} , \quad &\textrm{if } j \notin\Omega_i.
			\end{aligned}\right.
		\end{align*}
		Since $\mathcal{R}_{ij:} \in \{0,1\}^l$ and $\sum_{q=1}^l \mathcal{R}_{ijq} = 1$ for all $(i,j) \in \Omega$, and $\mathcal{V}^{t-1} \in \bar{\mathcal{F}}_{\mathcal{V}}$ ensures that $\sum_{q=1}^l \mathcal{V}_{pjq} = 1$, it follows that $\hat{\mathcal{R}}^t \in [0,1]^{n \times m \times l}$ and satisfies $\sum_{q=1}^l (\hat{\mathcal{R}}^t)_{ijq} = 1$ for all $i \in [n]$ and $j \in [m]$. Therefore, ${\mathcal{V}}^t$ given by \eqref{eq:updateV_avg} satisfies ${\mathcal{V}}^t\in\bar{\mathcal{F}}_{\mathcal{V}}$, completing the induction.
		
		Noting that problem~\eqref{pr:ALS-sub-U} is row-wise separable and each row $(U^t)_i$ is a one-hot vector indicating the group in $\mathcal{V}^t$ whose representation best matches the observed entries of the $i$-th row of $\mathcal{R}$, we obtain the closed-form solution given in \eqref{eq:def_assignment_U}.
	\end{proof}
	
	Notably, when the data tensor $\mathcal{R}$ is fully observed, i.e., $\Omega = [n] \times [m]$, Algorithm \ref{alg:alt_min} reduces to the standard alternating minimization method. In the presence of missing entries, however, directly updating $\mathcal{V}$ across all locations based on partially observed data is unreliable, especially in regions with sparse or missing observations, as the objective function in \eqref{pr:mec2} is only defined on the observed index set $\Omega$. To address this, Algorithm \ref{alg:alt_min} performs block-wise updates of $\mathcal{V}$ and $U$, with a key modification that $\mathcal{V}$ is updated using the imputed tensor $\hat{\mathcal{R}}$ defined in \eqref{eq:imputingR}. This imputation-based update enhances robustness to missing data and allows more reliable estimation of $\mathcal{V}$ even in sparsely observed regions.
	
	\subsection{Convergence analysis}
	In this subsection, we show that the iterates generated by TRC in Algorithm~\ref{alg:alt_min} yields a non-increasing sequence of objective values. Moreover, we establish that the generated sequence converges to a blockwise minimum point of problem~\eqref{pr:mec2}, a notion well suited to problems involving mixed discrete and continuous variables or block-wise updates (cf. \cite[Eq.~(4)]{tseng2001convergence} and \cite[Sect.~2]{razaviyayn2013unified}), as formally defined below.
	
	\begin{definition}[Blockwise minimum point of \eqref{pr:mec2}]
		A point $(U^*,\mathcal{V}^*)\in \mathcal{F}_U \times \bar{\mathcal{F}}_\mathcal{V}$ is a blockwise minimum point of \eqref{pr:mec2} 
		if
		\begin{align*}
			& f(U^*,\mathcal{V}^*) \leq f(U^*,\mathcal{V}), \quad \textrm{ for all } \mathcal{V}\in \bar{\mathcal{F}}_{\mathcal{V}},  
			\\
			\mbox{and}\quad & f(U^*,\mathcal{V}^*) \leq f(U,\mathcal{V}^*), \quad \textrm{ for all } U\in \mathcal{F}_U. 
		\end{align*} 
	\end{definition}
	
	We next establish the convergence results for TRC (Algorithm~\ref{alg:alt_min}).
	
	\begin{theorem}
		Let $\{(U^t, \mathcal{V}^t)\}$ be the sequence generated by TRC  in Algorithm~\ref{alg:alt_min}.
		\begin{itemize}[leftmargin=2em]
			\item[(i)] The objective function $\{f(U^t, \mathcal{V}^t)\}$ is monotonically non-increasing, and
			\begin{align}\label{eq:decreasing}
				f(U^{t}, \mathcal{V}^{t}) - f(U^{t+1}, \mathcal{V}^{t+1}) \geq \frac{1}{2} \left\| U^t (\mathcal{V}^{t+1} - \mathcal{V}^{t})\right\|^2, \quad t\geq 0,
			\end{align}
			and hence converge.
			\item[(ii)]  The sequence $\{(U^t, \mathcal{V}^t)\}$ converges to a blockwise minimum point of problem~\eqref{pr:mec2}.
		\end{itemize}
	\end{theorem}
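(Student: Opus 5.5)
The plan for part (i) is to read the $\mathcal{V}$-update as one step of a majorization--minimization scheme. Fix $t$ and consider the surrogate $\hat f_{t+1}(\mathcal{V}) = \frac12\|\hat{\mathcal{R}}^{t+1} - U^t\mathcal{V}\|^2$, where $\hat{\mathcal{R}}^{t+1} = P_\Omega(\mathcal{R}) + P_{\Omega^c}(U^t\mathcal{V}^t)$.

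Splitting the norm over $\Omega$ and $\Omega^c$ gives
\[
\hat f_{t+1}(\mathcal{V}) = f(U^t,\mathcal{V}) + \tfrac12\|P_{\Omega^c}(U^t(\mathcal{V}^t-\mathcal{V}))\|^2 \ge f(U^t,\mathcal{V}),
\]
with equality at $\mathcal{V}=\mathcal{V}^t$. Since $\hat f_{t+1}$ is a convex quadratic and $\mathcal{V}^{t+1}$ is its minimizer (Proposition~\ref{prop:sol_V}(i)), the exact least-squares identity
\[
\hat f_{t+1}(\mathcal{V}) = \hat f_{t+1}(\mathcal{V}^{t+1}) + \tfrac12\|U^t(\mathcal{V}-\mathcal{V}^{t+1})\|^2
\]
holds, because the cross term vanishes by the normal equations. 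This identity also covers the empty groups $J_p=\emptyset$, since those slices do not enter $U^t\mathcal{V}$ at all. Evaluating it at $\mathcal{V}=\mathcal{V}^t$ gives
\[
f(U^t,\mathcal{V}^{t+1}) \le \hat f_{t+1}(\mathcal{V}^{t+1}) = f(U^t,\mathcal{V}^t) - \tfrac12\|U^t(\mathcal{V}^{t+1}-\mathcal{V}^t)\|^2 .
\]
The $U$-update is an exact minimization over $\mathcal{F}_U$ by Proposition~\ref{prop:sol_V}(ii), so $f(U^{t+1},\mathcal{V}^{t+1}) \le f(U^t,\mathcal{V}^{t+1})$. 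Chaining the two inequalities yields \eqref{eq:decreasing}. Since $f\ge 0$, the objective values converge.

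For part (ii), I would first extract asymptotic regularity. Summing \eqref{eq:decreasing} shows $\sum_t\|U^t(\mathcal{V}^{t+1}-\mathcal{V}^t)\|^2<\infty$. Because $U^t$ has one-hot rows,
\[
\|U^t\Delta\|^2 = \sum_p |J_p^t|\,\|\Delta_{p::}\|^2 ,
\]
and by \eqref{eq:updateV_avg} slices with $J_p^t=\emptyset$ do not move. Hence $\|\mathcal{V}^{t+1}-\mathcal{V}^t\|\to 0$.

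Next I would pass to limit points. The iterates lie in the compact set $\mathcal{F}_U\times\bar{\mathcal{F}}_\mathcal{V}$, and $\mathcal{F}_U$ is finite. So I take a subsequence with $U^{t_j}\equiv U^*$ and $\mathcal{V}^{t_j}\to\mathcal{V}^*$; then $\mathcal{V}^{t_j+1}\to\mathcal{V}^*$ as well.

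The $\mathcal{V}$-block condition comes from the averaging map. The map $\mathcal{V}^t\mapsto\mathcal{V}^{t+1}$ for fixed $U^*$ is continuous (affine), so $\mathcal{V}^*$ is a fixed point of it. A fixed point of the MM step minimizes the surrogate built at itself, and the surrogate touches $f(U^*,\cdot)$ there. Therefore the gradient of the convex function $f(U^*,\cdot)$ vanishes at $\mathcal{V}^*$: the two gradients coincide because the extra term $\tfrac12\|P_{\Omega^c}(U^*(\mathcal{V}^*-\mathcal{V}))\|^2$ has zero gradient at $\mathcal{V}=\mathcal{V}^*$. Hence $\mathcal{V}^*$ minimizes $f(U^*,\cdot)$ over $\mathbb{R}^{k\times m\times l}$, and a fortiori over $\bar{\mathcal{F}}_\mathcal{V}$. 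Membership in $\bar{\mathcal{F}}_\mathcal{V}$ follows because that set is closed and contains all iterates.

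The $U$-block condition comes from the assignment step. We have $f(U^{t_j},\mathcal{V}^{t_j})\le f(U,\mathcal{V}^{t_j})$ for every $U$ in the finite set $\mathcal{F}_U$, and continuity of $f$ passes this to the limit.

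The main obstacle is upgrading subsequential convergence to convergence of the whole sequence. The difficulty is that $U^t$ could in principle oscillate among finitely many assignments with ties. My first attempt is to show that $U^t$ is eventually constant.

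The set of limit points of $\{\mathcal{V}^t\}$ is connected because the step sizes tend to zero. Each limit point must be a fixed point of the averaging map for every assignment $U$ that occurs infinitely often near it, and these fixed points are isolated or affine-constrained.

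If $U^t$ switched infinitely often between $U^a$ and $U^b$, then for large $t$ the update for $U^b$ would have to be an exact minimizer at points near a common limit. I would then use the deterministic smallest-index tie-breaking rule in \eqref{eq:def_assignment_U}, together with the strict decrease of $f$ whenever a switch is not a tie, to rule out infinitely many switches.

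Once $U^t\equiv U^*$ for large $t$, the $\mathcal{V}$-iteration is a fixed affine map. On each entry it is a convex combination $\mathcal{V}_{pj:}\mapsto (1-\alpha_{pj})\,\bar{\mathcal{R}}_{pj:}+\alpha_{pj}\mathcal{V}_{pj:}$, where $\bar{\mathcal{R}}_{pj:}$ is the average of the observed fibers and $\alpha_{pj}\in[0,1]$ is the missing fraction of column $j$ in group $p$. This map is a contraction when $\alpha_{pj}<1$ and the identity when $\alpha_{pj}=1$, so $\mathcal{V}^t$ converges and the whole sequence converges. If the tie argument proves too delicate, I would fall back to stating the result for every limit point, which the subsequence argument already establishes.
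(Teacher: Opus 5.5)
Your part (i) is correct and is the paper's own argument, phrased as majorization--minimization. Your characterization of limit points is also correct. Deducing $\|\mathcal{V}^{t+1}-\mathcal{V}^t\|\to 0$ from $\|U^t\Delta\|^2=\sum_p |J_p^t|\,\|\Delta_{p::}\|^2$ gives the fixed-point property more cleanly than the paper's detour through $\tilde{\mathcal{V}}^*$.

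The genuine gap is the step you flag yourself. Statement (ii) asserts convergence of the \emph{whole} sequence, and your sketch of eventual constancy of $U^t$ does not establish it, for three reasons.
\begin{itemize}
\item Strict decrease of $f$ at non-tie switches does not bound the number of switches. Near a limit $\mathcal{V}^*$ at which some row $i$ is tied between groups $p$ and $q$, the decrements $f(U^t,\mathcal{V}^{t+1})-f(U^{t+1},\mathcal{V}^{t+1})$ tend to zero and can be summable.
\item The smallest-index rule only resolves exact ties. It is irrelevant when $\|P_{\Omega_i}(\mathcal{R}_{i::}-\mathcal{V}^t_{p::})\|^2-\|P_{\Omega_i}(\mathcal{R}_{i::}-\mathcal{V}^t_{q::})\|^2$ is nonzero but alternates in sign.
\item Connectedness of the limit set does not help on its own. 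For a fixed $U$, the fixed-point set of the averaging map is affine rather than isolated, because fibers $(p,j)$ not observed by any row of group $p$ are free.
\end{itemize}
Your fallback, a statement about limit points only, is strictly weaker than (ii).

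The paper's argument does not fill this gap either. It shows that for each fixed $j$ the shifted subsequences $(U^{s_\nu+j},\mathcal{V}^{s_\nu+j})$ converge to $(U^*,\mathcal{V}^*)$, and concludes that the whole sequence converges. That does not follow, since the convergence is not uniform in $j$. Its claim that $U^{s_\nu+1}\to U^*$ because $U$ is uniquely determined by $\mathcal{V}$ also ignores that the tie-broken assignment map is discontinuous at ties.

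Part of the gap can be closed directly. Each fiber is either unchanged or updated as $\mathcal{V}^{t+1}_{pj:}=(1-\beta)\mathcal{V}^t_{pj:}+\beta\theta$. Here $\beta\ge 1/n$ is the fraction of rows of group $p$ observing column $j$. The point $\theta$ lies in the finite set $\Theta_j$ of averages of observed fibers $\mathcal{R}_{ij:}$ over nonempty sets of rows observing $j$.
\begin{itemize}
\item At each update, $\operatorname{dist}(\mathcal{V}^t_{pj:},\Theta_j)\le n\|\mathcal{V}^{t+1}-\mathcal{V}^t\|$, and the fiber is frozen between updates.
\item Hence $\operatorname{dist}(\mathcal{V}^t_{pj:},\Theta_j)\to 0$ for fibers updated infinitely often, while the other fibers are eventually constant.
\item Finiteness of $\Theta_j$ together with vanishing steps then forces $\{\mathcal{V}^t\}$ to converge, with no stabilization of $U^t$ needed.
\end{itemize}
Convergence of $\{U^t\}$, i.e.\ its eventual constancy, still requires an argument excluding sign alternation at near-ties. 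Neither your proposal nor the paper provides one.
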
 
	\begin{proof}
		(i): According to the update rules in Algorithm~\ref{alg:alt_min}, we have 
		$$
		\begin{aligned}
			&\ f(U^t, \mathcal{V}^{t}) - f(U^{t+1}, \mathcal{V}^{t+1})  \\
			=&\ f(U^t, \mathcal{V}^{t}) - f(U^{t}, \mathcal{V}^{t+1}) +  f(U^{t}, \mathcal{V}^{t+1}) - f(U^{t+1}, \mathcal{V}^{t+1}) \\
			\geq&\ f(U^t, \mathcal{V}^{t}) - f(U^t, \mathcal{V}^{t+1})  \geq \hat{f}_{t+1}(\mathcal{V}^t) - \hat{f}_{t+1}(\mathcal{V}^{t+1}) ,
		\end{aligned}
		$$
		where the first inequality follows from \eqref{pr:ALS-sub-U}, and the second inequality from the fact that 
		$$\begin{aligned}
		\hat{f}_{t+1}(\mathcal{V})=&\ \frac{1}{2}\|P_{\Omega}(\mathcal{R}-U^t\mathcal{V})\|^2+\frac{1}{2}\|P_{\Omega^c} (U^t\mathcal{V}^t-U^t\mathcal{V})\|^2 \\
        = &\ f(U^t,\mathcal{V})+\frac{1}{2}\|P_{\Omega^c} (U^t\mathcal{V}^t-U^t\mathcal{V})\|^2.
		\end{aligned}$$ 
		Moreover, from the Taylor expansion of the quadratic function $\hat{f}_{t+1}(\mathcal{V})$ at $\mathcal{V}^{t+1}$, we can see
		$$
		\begin{aligned}
			& f(U^t, \mathcal{V}^{t}) - f(U^{t+1}, \mathcal{V}^{t+1}) \\
			&\geq  \langle \nabla \hat{f}_{t+1}(\mathcal{V}^{t+1}), \mathcal{V}^{t} - \mathcal{V}^{t+1} \rangle + \frac{1}{2} \left\| U^t (\mathcal{V}^{t+1} - \mathcal{V}^{t})\right\|^2 
			= \frac{1}{2} \left\| U^t (\mathcal{V}^{t+1} - \mathcal{V}^{t})\right\|^2,
		\end{aligned}
		$$
		where the last equality holds due to the optimality of $\mathcal{V}^{t+1}$ in minimizing the unconstrained optimization problem in \eqref{pr:ALS-V}, which implies that $\nabla \hat{f}_{t+1}(\mathcal{V}^{t+1})=0$.
		
		(ii): Since the generated sequence $\{(U^t, \mathcal{V}^t)\}\subseteq \mathcal{F}_U \times \bar{\mathcal{F}}_\mathcal{V}$ is bounded, it admits at least one cluster point. Denote such a cluster point by $(U^*, \mathcal{V}^*)$, and consider a subsequence $\{(U^{s_\nu}, \mathcal{V}^{s_\nu})\}$ converging to $(U^*, \mathcal{V}^*)$ as $\nu\to\infty$.
		Since $\mathcal{F}_U$ is finite and discrete, we must have $U^{s_\nu} = U^*$ for sufficiently large $\nu$. Without loss of generality, assume this holds for all $\nu$.
		
		Define $t_\nu = s_\nu+1$. According to Algorithm \ref{alg:alt_min} and Proposition \ref{prop:sol_V}(i), we can see that $\mathcal{V}^{t_\nu}$ satisfies
		\begin{equation*}
			({\mathcal{V}}^{t_\nu})_{p::}  = \left\{\begin{array}{ll}
				\frac{1}{|J_p^*|}\sum_{i\in J_p^*} ({P}_{\Omega} \left(\mathcal{R} \right) + P_{\Omega^c}   
				\left( {U}^{*}  \mathcal{V}^{s_\nu} \right) )_{i::} & \text{if }\ J_p^*\neq \emptyset, \\[4pt]
				({\mathcal{V}}^{s_{\nu}})_{p::} & \text{if }\ J_p^* =  \emptyset,
			\end{array}\right.  \quad \text{for } p\in[k],
		\end{equation*}
		where $J_p^*=\{i\in[n]\mid (U^*)_{i:}=e_p\}$. 
		Thus, the fact that $\{\mathcal{V}^{s_{\nu}}\}$ converges to $\mathcal{V}^*$ indicates that the sequence $\{\mathcal{V}^{t_{\nu}}\}$ also converges. By denoting the limit of $\{\mathcal{V}^{t_{\nu}}\}$ as $\tilde{\mathcal{V}}^*$, we have
		\begin{equation}\label{eq:updateV_limit}
			(\tilde{\mathcal{V}}^*)_{p::}  = \left\{\begin{array}{ll}
				\frac{1}{|J_p^*|}\sum_{i\in J_p^*} ({P}_{\Omega} \left(\mathcal{R} \right) + P_{\Omega^c}  
				\left( {U}^{*}  \mathcal{V}^{*} \right) )_{i::} & \text{if }\ J_p^*\neq \emptyset, \\[4pt]
				({\mathcal{V}}^{*})_{p::} & \text{if }\ J_p^* =  \emptyset,
			\end{array}\right.  \quad \text{for } p\in[k].
		\end{equation}
		In addition, we know from Algorithm \ref{alg:alt_min} and Proposition \ref{prop:sol_V}(ii) that, $U^{t_\nu}$ is uniquely determined by $\mathcal{V}^{t_\nu}$. This means that 
		the sequence $\{U^{t_\nu}\}$ also converges, with its limit denoted by $\tilde{U}^*$.
		Moreover, by taking $t = s_\nu$ in
		\eqref{eq:decreasing}, we have
		\begin{align*} 
			f(U^{*}, \mathcal{V}^{s_\nu}) - f(U^{t_\nu}, \mathcal{V}^{t_\nu}) \geq \frac{1}{2} \left\| U^* (\mathcal{V}^{t_\nu} - \mathcal{V}^{s_\nu})\right\|^2.
		\end{align*}
		According to part (i), we know that $\{f(U^t, \mathcal{V}^{t})\}_{t\geq 0}$ is non-increasing and converges, which, together with the above inequality, yields
		$$
		f(U^{*}, \mathcal{V}^{*}) = f(\tilde{U}^{*}, \tilde{\mathcal{V}}^{*}) \quad \text{and} \quad 
		\| U^* (\tilde{\mathcal{V}}^{*} - \mathcal{V}^{*})\|^2 = 0. 
		$$
		Then for each $p\in [k]$ such that $J_p^* \neq \emptyset$, we have
		\begin{eqnarray*}
			0 = \| U^* (\tilde{\mathcal{V}}^{*} - \mathcal{V}^{*})\|^2\geq \|(\tilde{\mathcal{V}}^{*})_{p::} - (\mathcal{V}^{*})_{p::}\|^2 = 0.
		\end{eqnarray*}
		And from \eqref{eq:updateV_limit}, we also have
		$\|(\tilde{\mathcal{V}}^{*})_{p::} - (\mathcal{V}^{*})_{p::}\| = 0$ for $J_p^* = \emptyset$. It follows that $\tilde{\mathcal{V}}^{*} = \mathcal{V}^{*}$, and consequently $\tilde{U}^* = U^*$.  
		
		As $\mathcal{V}^{t_\nu}$ is the optimal solution to \eqref{pr:ALS-V} with $t = t_\nu$, we have 
		\begin{equation*}
			\begin{aligned}
				\tilde{\mathcal{V}}^{*} =&\ \underset{\mathcal{V}\in\mathbb{R}^{k\times m\times l}}{\argmin}  
				\left\{	\frac 1 2 \left\|\hat{\mathcal{R}}^{*} - {U}^{*}\mathcal{V} \right\|^2 \right\},
				\quad \text{where} \quad 
				\hat{R}^*:={P}_{\Omega} \left(\mathcal{R} \right) + P_{\Omega^c}  
				\left( {U}^* \mathcal{V}^* \right).
			\end{aligned}
		\end{equation*}
		By the optimality of the above minimization problem, we can see that
		\begin{equation*}
			(U^*)^\top(U^* \tilde{\mathcal{V}}^{*}  - P_{\Omega}(R) - P_{\Omega^c}(U^* \mathcal{V}^{*}) )=0,
		\end{equation*}
		which, along with the fact that $\tilde{\mathcal{V}}^{*} =\mathcal{V}^* $, implies
		\begin{equation*}
			\nabla_{\mathcal{V}} f(U^*, \mathcal{V}^*) = (U^*)^\top(P_{\Omega}(U^* \mathcal{V}^{*} - R)=0.
		\end{equation*}
		Since $f(U^*, \cdot)$ is convex, we obtain 
		\begin{equation}\label{eq:partial_optimal_V}
			f(U^*, \mathcal{V}) \geq f(U^*, \mathcal{V}^*) + \langle \nabla_{\mathcal{V}} f(U^*, \mathcal{V}^*) , \mathcal{V} - \mathcal{V}^*\rangle = f(U^*, \mathcal{V}^*) , \quad \forall\ \mathcal{V} \in \bar{\mathcal{F}}_{\mathcal{V}}. 
		\end{equation}
		From the update rule of $U$, 
		it also holds that $ f(U, \mathcal{V}^{s_\nu}) \geq f(U^{s_\nu}, \mathcal{V}^{s_\nu}) $ for all $U\in\mathcal{F}_U$.
		By the continuity of $f(\cdot,\cdot)$, letting $\nu \to \infty$, we obtain 
		\begin{equation}\label{eq:partial_optimal_U}
			f(U, \mathcal{V}^{*}) \geq f(U^{*}, \mathcal{V}^{*}), \quad \forall \ U\in\mathcal{F}_U.
		\end{equation}
		Combining \eqref{eq:partial_optimal_V} and \eqref{eq:partial_optimal_U}, we conclude that $(U^*, \mathcal{V}^*)$ is a blockwise minimum point of problem~\eqref{pr:mec2}. 
		
		Finally, we have shown that if $(U^{s_{\nu}}, \mathcal{V}^{s_{\nu}}) \to (U^{*}, \mathcal{V}^{*})$, then 
		$(U^{s_{\nu}+1}, \mathcal{V}^{s_{\nu}+1}) \to (U^{*}, \mathcal{V}^{*})$. By induction, it follows that  
		$$(U^{s_{\nu}+j}, \mathcal{V}^{s_{\nu}+j}) \to (U^{*}, \mathcal{V}^{*}) \quad \text{for all } j=1,2,3,\ldots .$$
		Consequently, the entire sequence $\{(U^t, \mathcal{V}^t)\}$ converges to the parial optimal point $(U^{*}, \mathcal{V}^{*})$. 
	\end{proof}
	
	\section{Enhanced LCMC: Addressing scalability and data imbalance}
	\label{sect:enhanced}
	In this section, we present two enhanced strategies to address the computational challenges that arise in large-scale and imbalanced datasets. 
	
	\subsection{Split-merge-refine approach for large-scale data}\label{sect:eBCMC} 
	A classical way to tackle large-scale problems is the divide-and-conquer strategy (e.g., \cite{knuth1998art}), which partitions a problem into smaller, more manageable subproblems to improve computational efficiency. Inspired by this idea, we propose a split-merge-refine strategy to address the challenges of latent structural categorical matrix completion posed by large-scale data, with details given as follows.
	
	\textbf{Split.} We first randomly partition the data and its observed index set into $S$ parts, where $S$ is a user-defined parameter determined by the available computational resources so that each subproblem remains manageable. This yields $S$ independent subproblems, each associated with a subdataset $(\mathcal{R}^{(s)}, \Omega^{(s)})$ for $s \in[S]$. Each subproblem is then solved independently using LCMC to obtain a coarse estimate $\mathcal{V}^{(s)}$ of the latent tensor on that subset. 

	\textbf{Merge.}
	The latent dimension $k$ is initially overestimated after concatenating all $\mathcal{V}^s$. In the local merging step, 
	we first perform a pairwise merge test to determine whether two categories should be combined, using the internal Minimum Error Correction (MEC) score of each category as the evaluation criterion. Here we define the internal MEC for category $v$ as:
	\begin{eqnarray}\label{eq:withinMEC}
		\operatorname{MEC}_v = \frac{1}{2} \sum_{i\in I(v)}  \|{P}_{\Omega_i}(\mathcal{R}_{i::} -v)\|^2,
	\end{eqnarray}
	where the membership index set $I(\cdot)$ is
	\begin{eqnarray}\label{def:member_index}
		I(v) = \{i\in[n]\ |\ \textrm{data } \mathcal{R}_{i::} \textrm{  is assigned to } v \}. 
	\end{eqnarray} 
	Note that this membership index set can be directly obtained from the assignment matrix $U$. If the corresponding $U$ is unavailable, it can be computed following Proposition \ref{prop:sol_V}(ii). The merge procedure is summarized in Algorithm \ref{alg:merge}. 
	
    %\vspace*{-0.5cm}
	\begin{algorithm}[H]
		\caption{Local merge: (MergeFlag, $w$, $\mathcal{V}$) $\gets$ \textbf{MergeTest}($\mathcal{V},U$)}\label{alg:merge}
		\begin{algorithmic}[1]
			\REQUIRE Representative tensor $\mathcal{V}$ and the corresponding categorical encoding  $U$. 
			\renewcommand{\algorithmicrequire}{\textbf{Initialization:}}
			\REQUIRE Denote the initial latent dimension as $k$. Set MergeFlag $= 0$.
			\STATE Compute $\operatorname{MEC}_{\mathcal{V}_{1::}}$ by \eqref{eq:withinMEC}.
			\FOR{$j=2, \ldots, k$}
			\STATE Compute $\operatorname{MEC}_{\mathcal{V}_{j::}}$, and  $\operatorname{MEC}_{w}$ for $w$ with  $I(w)=I(\mathcal{V}_{1::})\cup I(\mathcal{V}_{j::})$. 
			\IF {$\operatorname{MEC}_{w}<\min\{\operatorname{MEC}_{\mathcal{V}_{1::}}, \operatorname{MEC}_{\mathcal{V}_{j::}}\}$}
			\STATE Remove $\mathcal{V}_{1::}$ and $\mathcal{V}_{j::}$ from $\mathcal{V}$; set MergeFlag $= 1$; break.
			\ENDIF
			\ENDFOR
			\ENSURE MergeFlag, $w$, $\mathcal{V}$. 
		\end{algorithmic}  
	\end{algorithm}
	%\vspace*{-0.5cm}

	\textbf{Refine.}  
	We then apply a global refinement step based on overall improvement in SSE. Specifically, for $\mathcal{V}\in\mathbb{R}^{k\times m\times l}$, we define the SSE at $\mathcal{V}$ as
	\begin{eqnarray}\label{eq:SSE_all}
		{\rm SSE}(\mathcal{V}) = \sum_{p=1}^k \sum_{i \in I(\mathcal{V}_{p::})}\ \|P_{\Omega_i}(\mathcal{R}_{i::} - \mathcal{V}_{p::})\|^2,
	\end{eqnarray}
	where the membership index set $I(\cdot)$ is defined in \eqref{def:member_index}. 
	We start with the solution obtained from the merge step and iteratively assess whether the change in SSE is acceptable when the category with the lowest frequency is removed. 
	Overall, this step aims to fine-tune the result by eliminating less significant categories while maintaining a good fit to the data.
	
	Given all the procedures presented, the enhanced LCMC is given in Algorithm \ref{alg:random_split}.
	\begin{algorithm}[!h]
		\caption{Enhanced LCMC for large-scale dataset: $(U,\mathcal{V})\gets$\textbf{eLCMC}($\mathcal{R}, \Omega$)}\label{alg:random_split}
		\begin{algorithmic}[1]
			\REQUIRE Data tensor $\mathcal{R}\in \{0,1\}^{n\times m\times l}$ and observed index set $\Omega\subseteq [n]\times [m]$. 
			\renewcommand{\algorithmicrequire}{\textbf{Initialization:}}
			\REQUIRE Choose $\epsilon>0$ (in our experiments, we set $\epsilon=10^{-2}$), $\textrm{SSE}_{\textrm{old}}=mn$ and the number of subproblems as $S$.
			\STATE Randomly split $\mathcal{R}$ and $\Omega$ by rows into $S$ parts $\{\mathcal{R}^s,\Omega^s\}_{s\in[S]}$.
			\FOR{$s=1,\ldots,S$}
			\STATE $(U^{(s)},\mathcal{V}^{(s)}) \gets$ \textbf{LCMC}($\mathcal{R}^{(s)}, \Omega^{(s)}$) by Algorithm \ref{alg:BCMC}.
			\ENDFOR
			\STATE Concatenate $\mathcal{V}\! = \![\mathcal{V}^{(1)}; \ldots; \mathcal{V}^{(s)}]$ and $U\!=\![U^{(1)};\ldots; U^{(s)}]$. 
			Set $\hat{\mathcal{V}}=\emptyset$.
			\WHILE{number of categories in $\mathcal{V}$ exceeds $1$}%
			\STATE (MergeFlag, $v$, $\mathcal{V}$) $\gets$ \textbf{MergeTest}($\mathcal{V}, U$) by Algorithm \ref{alg:merge}.
			\IF{MergeFlag} 
			\STATE Add $v$ to $\hat{\mathcal{V}}$. 
			\ELSE
			\STATE Add $\mathcal{V}_{1::}$ to $\hat{\mathcal{V}}$. Remove $\mathcal{V}_{1::}$ from $\mathcal{V}$ and update $U$ accordingly.
			\ENDIF
			\ENDWHILE
			\STATE Concatenate $\hat{\mathcal{V}}$ to $\mathcal{V}$. 
			Let $k$ be the number of categories in $\mathcal{V}$.
			\FOR{$i=1\ldots,k$}
			\STATE Let $\tilde{\mathcal{V}}$ be $\mathcal{V}$ without its least frequent category. Compute
			$\textrm{SSE}(\tilde{\mathcal{V}})$ by \eqref{eq:SSE_all}.  \\
			\textbf{if} {${\textrm{SSE}(\tilde{\mathcal{V}})}/{\textrm{SSE}_{\textrm{old}}} \geq 1+\epsilon$} \textbf{then} break 
			\textbf{end if}
			\STATE Set ${\mathcal{V}} = \tilde{\mathcal{V}}$ and  $\textrm{SSE}_{\textrm{old}}=\textrm{SSE}( \tilde{\mathcal{V}})$.
			\ENDFOR
			\ENSURE Representative tensor $\mathcal{V}$ and the corresponding assignment matrix ${U}$. 
		\end{algorithmic}  
	\end{algorithm}
    
	\subsection{Adaptive data reduction for imbalanced dataset}\label{sect:adaTenRec}
	Since the objective of \eqref{pr:mec2} is to minimize the restricted distances between data points and their assigned representatives, categories with substantially more data points tend to dominate the objective. This biases the solution toward majority patterns, making it difficult to detect rare components. 
	
	To address this issue, we propose an adaptive data reduction strategy within the TRC framework to reduce the influence of the dominant patterns and enhance the detection of rare ones. Specifically, we first solve the completion problem to identify the dominant category. Data points assigned to this dominant category are then removed, and TRC is rerun on the reduced dataset. This process is repeated iteratively to progressively recover distinct categories.
	A key challenge is reliably identifying which samples are associated with the dominant category. Although the matrix $U$ provides the assignment information, it may be unreliable in imbalanced settings. To improve robustness, we introduce a criterion based on the individual MEC, defined for a data point $\mathcal{R}_{i::}$ and representative $v$ as
	\begin{equation}\label{eq:singleMEC}
		\operatorname{MEC}_v^i = \frac{1}{2}\|{P}_{\Omega_i}(\mathcal{R}_{i::} - v)\|^2.
	\end{equation}
	The complete adaptive data reduction procedure is outlined in Algorithm \ref{alg:adaptive}.
	
	%\vspace{-0.3cm}
	\begin{algorithm}[H]
		\caption{Tensor completion with adaptive data reduction : $(\hat{U},\hat{\mathcal{V}}) \gets$\textbf{aTRC}($\mathcal{R}, \Omega, k$)}\label{alg:adaptive}
		\begin{algorithmic}[1]
			\REQUIRE  Data tensor $\mathcal{R}$, observed index set $\Omega$, and latent dimension $k$.
			\renewcommand{\algorithmicrequire}{\textbf{Initialization:}}
			\REQUIRE Set initial $\hat{\mathcal{V}} = \emptyset$. 
			\WHILE{$\|{\mathcal{R}}\|\geq 0$ and $k>1$}
			\STATE Compute tensor completion by Algorithm \ref{alg:alt_min}: $(U,\mathcal{V})\gets \textbf{TRC}(\mathcal{R}, \Omega, k)$.
			\STATE Identify the most dominant category $v$ in $\mathcal{V}$.
			\STATE Remove $\mathcal{R}_{i::}$ and $\Omega_{i}$ for all $i \in I(v)$ with $\operatorname{MEC}_v^i$ in \eqref{eq:singleMEC} below average.
			\STATE Add slice $v$ to $\hat{\mathcal{V}}$, and decrement $k \gets k-1$.
			\ENDWHILE
			\ENSURE Representative tensor $\hat{\mathcal{V}}$ and the corresponding assignment matrix $\hat{U}$.
		\end{algorithmic}  
	\end{algorithm}
	
	\begin{remark}
		In Algorithm \ref{alg:adaptive}, TRC may be called multiple times, making it the most computationally expensive component. However, it is only re-executed after each round of data reduction, and the dataset size shrinks with each iteration, leading to a substantial decrease in cost over time. As a result, the overall computational burden remains manageable. Furthermore, most of the other computations, such as evaluating MEC and SSE, are already integrated into LCMC (see Section \ref{sec: upper_level}) and incur minimal additional overhead.
	\end{remark}

	\section{Application to viral quasispecies analysis}\label{sec: viral}
	As an application of our proposed latent structural categorical matrix completion framework, we consider the problem of viral quasispecies (VQS) reconstruction. In this setting, sequencing reads correspond to partially observed categorical entries, with each position representing one of a finite set of nucleotides. The objective is to reconstruct the underlying viral strains that best explain these incomplete observations. This problem fits naturally into our framework and highlights its practical relevance.
	
	%\vspace{-0.5cm}
	\begin{figure}[H]
		\centering
		\includegraphics[width=0.8\textwidth]{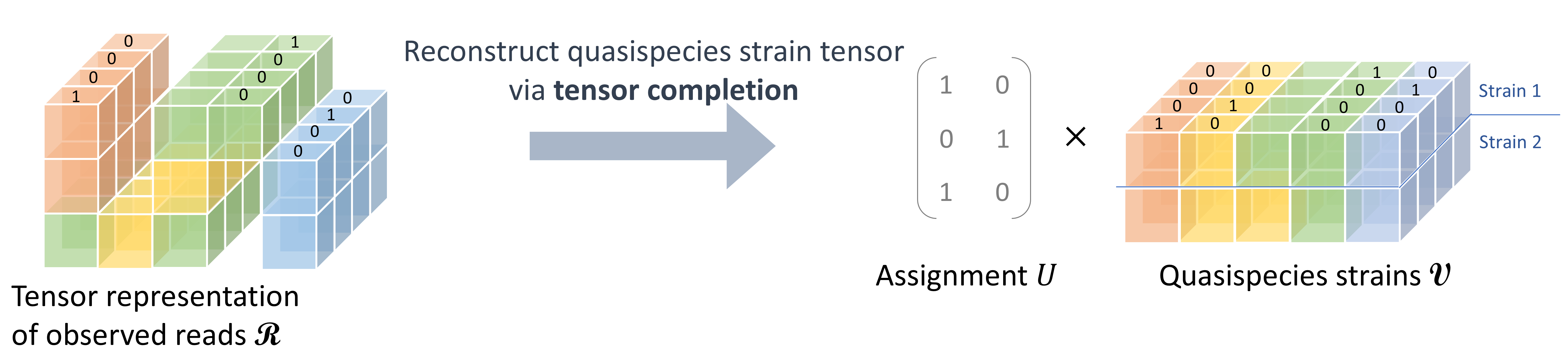}
		\vspace{-0.2cm}
		\caption{
			Example of tensor representation of VQS with $n=3$, $m=5$ and $k=2$. }
		\label{fig:tensor_factorization}
	\end{figure}
	%\vspace{-0.8cm}

	\subsection{Tensor representation of quasispecies analysis}
	Let $n$ be the number of sequencing reads, and $m$ denote the length of each viral strain, i.e., the number of nucleotide positions (also called sites or bases). Each base takes one of four possible nucleotides ($l=4$), \{A, C, G, U/T\}, which we represent using one-hot encoding as $\left\{(1,0,0,0),(0,1,0,0),(0,0,1,0),(0,0,0,1)\right\}$. The sequencing reads are encoded as a tensor $ \mathcal{R}=P_{\Omega}(\mathcal{R})\in \{0,1\}^{n\times m\times 4}$, where for each $(i,j)\in \Omega$, $\mathcal{R}_{ij:}$ indicates the nucleotide observed at site $j$ in read $i$. Here $\Omega\subseteq [n]\times [m]$ denotes the observed index set of the sequencing reads.
	Suppose the quasispecies contains $k$ distinct strains, represented by a tensor $\mathcal{V}\in \{0,1\}^{k \times m\times 4}$. 	
	Our goal is to estimate the number of strains $k$ and the quasispecies tensor $\mathcal{V}\in \{0,1\}^{k\times m\times 4}$, based on the partially observed tensor ${P}_{\Omega}(\mathcal{R})$, see Figure \ref{fig:tensor_factorization}.
			
			\subsection{Numerical results}
			\label{sec: experiment}
			We conduct numerical experiments to assess the effectiveness of our proposed LCMC for quasispecies analysis on short-length reads, long-length reads, and large-scale and imbalanced data. Both LCMC and the benchmark method TenSQR \cite{ahn2018viral} are implemented in Python, ensuring a fair comparison in terms of computational efficiency. TenSQR is a well-established optimization-based solver for quasispecies analysis and serves as a strong benchmark. As demonstrated in \cite{ahn2018viral}, it outperforms other publicly available software tools for quasispecies analysis, such as aBayesQR \cite{ahn2018abayesqr}, ViQuaS \cite{jayasundara2015viquas}, PredictHaplo \cite{prabhakaran2013hiv}, and ShoRAH \cite{zagordi2011shorah}. This enables us to showcase the advantages of LCMC in terms of efficiency, accuracy, robustness and scalability. 
			
			All experiments were run on a desktop with a 13th Gen Intel(R) Core(TM) i7-13700 2.10 GHz processor and 32GB of RAM.
			Sections \ref{sect:synthetic} and \ref{sect:real} present the results of {LCMC} on short-length and long-length reads data, respectively.
			Section \ref{sect:numerical_enhanced} examines the performance of the enhanced techniques introduced in Section~\ref{sect:enhanced}  on large-scale and imbalanced data.
			
			\subsubsection{Experiments on short-length read data}
			\label{sect:synthetic}
			To systematically evaluate the performance of LCMC, we generate synthetic short-length read datasets with known ground truth, which allow us to assess the solver's accuracy, efficiency and robustness under controlled conditions. We vary key parameters, such as sequencing error rate, strain diversity, and the number of strains, to analyze how LCMC performs across different problem settings.
			
			We generate the data by emulating high-throughput sequencing (HTS) of quasispecies samples using the short-read simulator ART\footnote{\url{https://www.niehs.nih.gov/research/resources/software/biostatistics/art}} \cite{huang2012art}, which replicates the sequencing process based on empirical error models or user-defined quality profiles. Specifically, we first randomly generate a reference genome of length 1300 bp, a typical length for the HIV-1 pol region and also used in TenSQR. Viral strains are then created by introducing independent mutations at uniformly random locations in the reference genome, with the diversity level measured by the average Hamming distance between strains. Then, we generate $n$ paired-end reads of length $2 \times 250$ bp, uniformly sampled from a mixture of viral strains, while incorporating a given sequencing error rate. These reads are then aligned to the reference genome using the BWA-MEM\footnote{\url{https://github.com/lh3/bwa}} \cite{li2013aligning}, a tool for aligning short reads to a reference genome and filtering out low-quality reads. Since the strain reconstruction procedure only utilizes the heterozygous sites (i.e., the sites containing allele variants), rather than the full-length reads, we further preprocess the aligned data by removing non-informative sites. The final processed read data serves as the input for our quasispecies analysis.
			
			We test the performance of LCMC and TenSQR under various parameter settings, including different diversity levels, sequencing error rates, and strain mixtures. The diversity level (div) in our experiments ranges from 1\% to 5\%, and the sequencing error rate (err) is set to $0.7\%$ or $0.2\%$. We consider mixtures of five and ten strains with frequencies (0.5, 0.3, 0.15, 0.04, 0.01) and (0.36, 0.24, 0.16, 0.08, 0.055, 0.04, 0.03, 0.02, 0.01, 0.005), respectively. The number of generated reads is set as $n=6500$, with the actual number after filtering typically ranging from 6400 to 6500. 
			
			The numerical result is presented in Tables \ref{tab:k_5} and \ref{tab:k_10}. LCMC demonstrates comparable performance to TenSQR, a strong benchmark approach for quasispecies analysis using short-length reads, in terms of both running time and solution quality. LCMC shows similar performance across most scenarios and outperforms TenSQR in cases with low diversity. 
			Notably, TenSQR exhibits instability in quasispecies size estimation in certain instances, where it occasionally fails to terminate due to uncontrolled growth in inferred strain numbers. In contrast, LCMC maintains robust performance and reliably produces well-structured solutions under the same conditions.
			
			%\vspace{-0.2cm}
			\begin{table}[H]
				\begin{center}
					\caption{Results on synthetic data with the truth quasispecies size $\bar{k}=5$ (average over 50 instances).
						``$|\Omega|/mn (\%) $" denotes the observed data percentage. 
						``E(N)/F" indicates the number of exact ($k=\bar{k}$), near-exact ($\bar{k}\pm 1$), and failed cases. ``Time" reports the CPU time in seconds.
					} 
					\vspace{-0.5em} % Part II
					\label{tab:k_5}
					\footnotesize
					\begin{tabular}{c|c|c|c|c|c|c}
						\hline 
						\rule{0pt}{1.2em} \multirow{2}{*}{ \shortstack{err\\$10^{-3}$}} & \multirow{2}{*}{\shortstack{div\\ \%}} & \multirow{2}{*}{ \shortstack{ ${|\Omega|}/{mn}$ \\ \%}} & \multicolumn{2}{|c|}{TenSQR}  & \multicolumn{2}{|c}{LCMC}  \\ \cline{4-7} 
						\rule{0pt}{1.2em}   & & & E(N)/F &Time (s)  & E(N)/F &Time (s)  \rule[-0.6em]{0pt}{1em} 
						\\ \hline
						\rule{0pt}{1.2em}    
						\multirow{5}{*}{$2$} 
						&1   & 48.9 &2(20)/4 &13 & 30(37)/- & 17   \\
						\rule{0pt}{1.2em}  &2  & 48.4 &2(24)/- & 11 & 36(41)/- & 15  \\
						\rule{0pt}{1.2em}  &3  & 48.5 &36(40)/- & 10 & 37(44)/- & 14  \\
						\rule{0pt}{1.2em}  &4  & 49.1 & 39(43)/- & 10 & 42(45)/- & 13  \\ 
						\rule{0pt}{1.2em}  &5  & 48.1 & 42(45)/- & 11 & 42(46)/- & 15  \\ 
						\hline
						\rule{0pt}{1.2em}    \multirow{5}{*}{$7$} 
						&1  	& 48.6 & 8(24)/4 &13 &20(27)/- & 19   \\ 
						\rule{0pt}{1.2em}  &2  & 48.7 & 32(39)/- & 10 &35(40)/- & 14  \\
						\rule{0pt}{1.2em}  &3  & 49.0 & 34(36)/- & 10 & 36(40)/- & 13 \\
						\rule{0pt}{1.2em}  &4  & 48.1 & 40(41)/- &10 & 40(43)/- & 13   \\
						\rule{0pt}{1.2em}  &5  & 49.2 & 40(43)/- &13 & 41(45)/- & 15  \\ 
						\hline 
					\end{tabular}		
				\end{center}
			\end{table}
			\vspace{-0.5cm}
			\begin{table}[H]
				\begin{center}
					\caption{Results on synthetic data with the truth quasispecies size $\bar{k}=10$ (average over 50 instances).
					} 
					\vspace{-0.5em} % Part II
					\label{tab:k_10}
					\footnotesize
					\begin{tabular}{c|c|c|c|c|c|c}
						\hline 
						\rule{0pt}{1.2em} \multirow{2}{*}{ \shortstack{err\\$10^{-3}$}} & \multirow{2}{*}{\shortstack{div\\ \%}} & \multirow{2}{*}{ \shortstack{ ${|\Omega|}/{mn}$ \\ \%}}  & \multicolumn{2}{|c|}{TenSQR}  & \multicolumn{2}{|c}{LCMC}   \\ \cline{4-7} 
						\rule{0pt}{1.2em}   & & & E(N)/F &Time (s)  & E(N)/F &Time (s)  \rule[-0.6em]{0pt}{1em} 
						\\ \hline
						%$2\times 10^{-3}$ &$9$ &$6500$ & 5 &48.8 & 500046 & 5/8/8 & 2.10 / 4.97 \\
						\rule{0pt}{1.2em}    
						\multirow{5}{*}{$2$} 
						&1   & 48.9   &5(23)/1 &112 & 6(23)/- & 87 \\
						\rule{0pt}{1.2em}  &2  & 48.4 &13(27)/- &89  & 11(24)/- & 103 \\
						\rule{0pt}{1.2em}  &3  & 48.5 &12(28)/- &96 & 11(28)/- & 89\\
						\rule{0pt}{1.2em}  &4  & 49.1  &21(27)/- &105 & 21(29)/- & 107\\ 
						\rule{0pt}{1.2em}  &5  & 48.1  &27(30)/- &113 & 27(29)/- & 121\\    
						\hline
						\rule{0pt}{1.2em}    \multirow{5}{*}{$7$} 
						&1  	& 48.6  &10(23)/- &108 & 11(25)/- & 103 \\ 
						\rule{0pt}{1.2em}  &2  & 48.7  &11(23)/- &121 & 13(23)/- & 118\\
						\rule{0pt}{1.2em}  &3  & 49.0  &15(27)/- &95 & 13(25)/- & 103\\
						\rule{0pt}{1.2em}  &4  & 48.1  &19(26)/- &102 & 21(25)/- & 113 \\
						\rule{0pt}{1.2em}  &5  & 49.2 &25(29)/- &125 & 26(31)/- & 130\\ 
						\hline 
				\end{tabular}		\end{center}
			\end{table}
			
			\subsubsection{Experiments on long-length read data}
			%	\subsection{Experiments on real data}
			\label{sect:real} 
			We present the numerical result on real-world datasets consisting of long-length reads/contigs, downloaded from the NCBI SARS-CoV-2 Data Hub\footnote{\url{https://www.ncbi.nlm.nih.gov/}}. 
			To prepare the data for the experiments, we first downloaded isolates from the NCBI SARS-CoV-2 Data Hub, using the `Pangolin' classification as the reference label for each isolate. The raw sequences were then aligned to the Wuhan reference genome (NC\_045512) using the MAFFT alignment tool. Following alignment, we extracted a matrix consisting of only the heterozygous sites across all reads, which served as the input data for the subsequent quasispecies reconstruction process. 
			We downloaded 1021 isolates, categorized into five groups based on Pangolin labels: \emph{A, AY.100, B.1.1.7, XBB.1.5, JD.1.1}. After alignment, the sequences had a length of 29909, with 29266 remaining heterozygous sites. 
     
            %\vspace{-0.5cm}
			\begin{figure}[H]
				\centering
				\includegraphics[width=\textwidth]{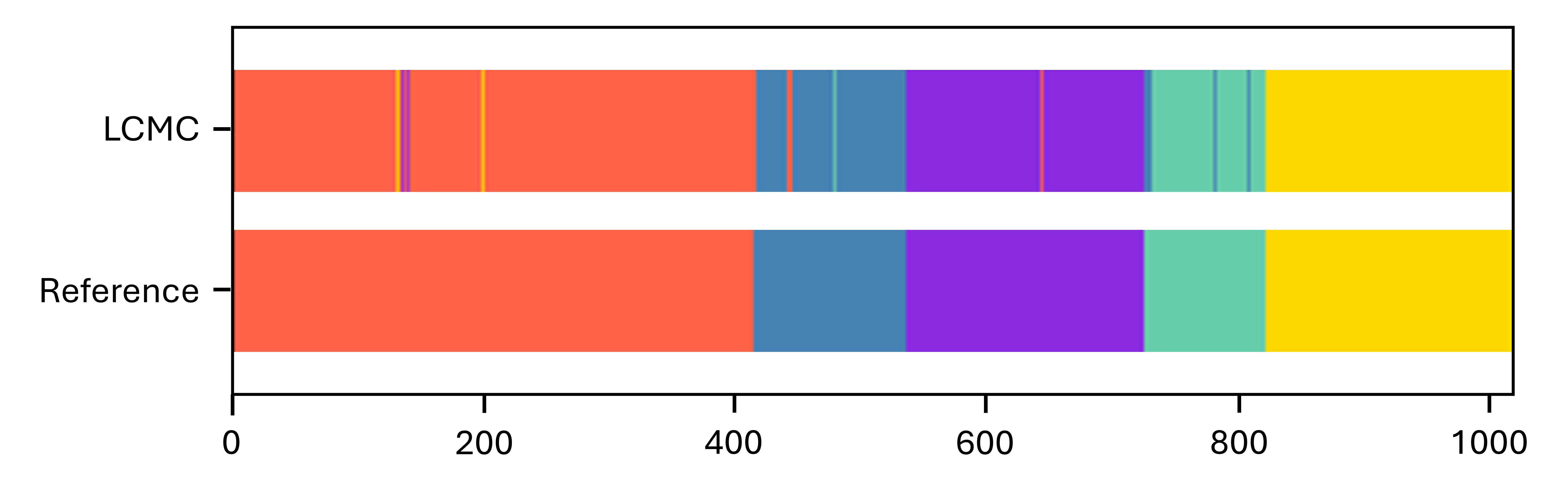}
				\caption{Comparison of estimated labels from LCMC with the reference labels, where different colors represent different strains.}
			\label{fig:real_test_1}
		      \end{figure}
		      %\vspace{-0.5cm}    
			
			LCMC completed the analysis in 351.99 seconds, estimating the quasispecies size as $k=5$ (refining by removing strains with membership size fewer than 5\%). In contrast, TenSQR failed to terminate within 6 hours, with the estimated value of $k$ continuously increasing and eventually reaching 192. A comparison between the reference labels and the LCMC solution is shown in Figure~\ref{fig:real_test_1}.

        %\vspace{-0.5cm}
		\begin{figure}[H]
			\centering
			\includegraphics[width=\textwidth]{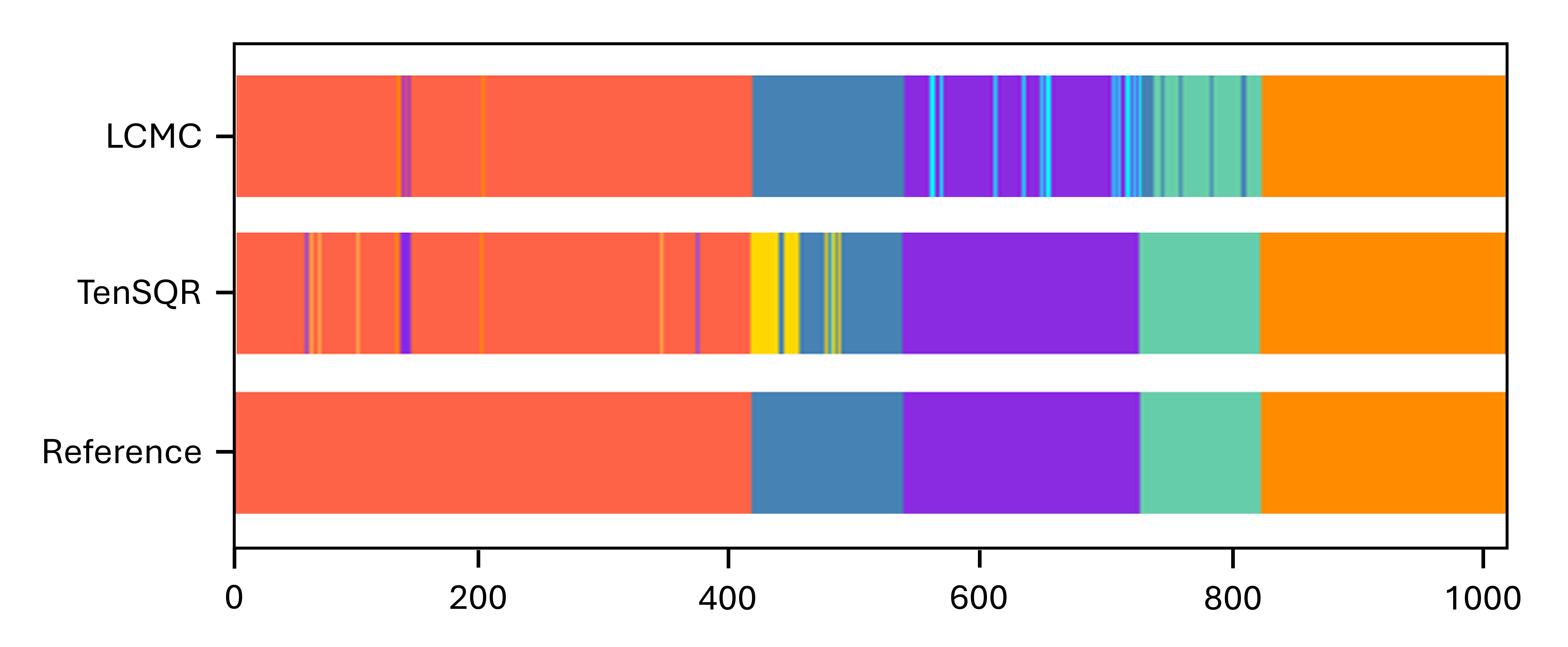}
			\caption{Comparison of estimated labels from LCMC and TenSQR with the reference labels.}
			\label{fig:real_test_1_cliped}
		\end{figure}
		%\vspace{-0.5cm}
        
		To enable TenSQR to produce a reasonable result, we clipped the read length to 8500 and reran the experiment. Under this condition, LCMC completed the analysis in 323.22 seconds, estimating $k=6$ with 63 mislabeled reads (after refinement). TenSQR, in comparison, ran for 819.92 seconds, estimated $k=8$, and resulted in 69 mislabels. A comparison of the solutions obtained from LCMC, TenSQR, and the reference labels is presented in Figure~\ref{fig:real_test_1_cliped}.
		Overall, LCMC consistently exhibited greater stability and efficiency compared to TenSQR. Even under conditions adjusted to favor TenSQR’s convergence, LCMC achieved faster runtimes, fewer mislabeled reads, and more accurate estimates of the number of clusters. These findings underscore LCMC’s robustness and effectiveness in quasispecies analysis.
		
		\subsubsection{Experiments on large-scale and imbalanced dataset}
		\label{sect:numerical_enhanced}
		We evaluate the enhanced techniques introduced in Section~\ref{sect:enhanced} of LCMC on large-scale and imbalanced datasets, which were downloaded and preprocessed as described in Section~\ref{sect:real}. TenSQR is excluded due to its inability to process such large-scale datasets efficiently.      
        As part of this evaluation, we further investigate the split-merge-refine approach under different partition settings. Specifically, we consider two choices for the number of subproblems in Algorithm~\ref{alg:random_split}, namely $S=5$ and $S=8$, and for each choice repeat the experiment 60 times with different random seeds to generate different partitions, thereby assessing the robustness of the splitting step.
        
        Performance is evaluated using assignment accuracy together with group-size-weighted precision, recall, and F1 score. Specifically, let $N$, $K$, and $n_i$ denote the total number of samples, the number of strains, and the number of samples associated with the $i$-th strain, respectively, and define the evaluation metrics as:
        \begin{align*}
        \mathrm{Accuracy} &= \frac{1}{N}\sum_{j=1}^{N}\mathbf{1}(y_j=\hat{y}_j), \quad \;
        \mathrm{Precision} = \sum_{i=1}^{K}\frac{n_i}{N} \frac{\operatorname{TP}_i}{\operatorname{TP}_i+\operatorname{FP}_i}, \\
        \mathrm{Recall} &= \sum_{i=1}^{K}\frac{n_i}{N} \frac{\operatorname{TP}_i}{\operatorname{TP}_i+\operatorname{FN}_i}, \quad
        \mathrm{F1\ score} = \sum_{i=1}^{K}\frac{n_i}{N} \frac{2\operatorname{TP}_i}{2\operatorname{TP}_i + \operatorname{FP}_i + \operatorname{FN}_i}.
        \end{align*}
        Here, $y_j$ and $\hat{y}_j$ denote the true and predicted labels of the $j$-th sample, respectively, while $\operatorname{TP}_i$, $\operatorname{FP}_i$, and $\operatorname{FN}_i$ denote the numbers of correctly assigned, falsely assigned, and missed samples for the $i$-th true group, respectively.
		
		\textbf{Balanced dataset.} 
		The test dataset comprises five strain categories—\emph{AY.4}, \emph{BA.2}, \emph{KP.3.1.1}, \emph{LB.1.17}, and \emph{XEC}—with corresponding read counts of 2000, 2000, 2000, 2000, and 1936, respectively. To evaluate the effectiveness of the enhanced LCMC framework, we applied the method described in Section~\ref{sect:enhanced}. 
        Fig.~\ref{fig:numerical_enhanced_balanced} shows the performance of LCMC on this balanced dataset under different splitting settings. All metrics achieve consistently good and comparable mean values, indicating that the proposed framework achieves reliable performance while remaining robust to the choice of reasonable $S$. Precision remains consistently high with low variance, while Recall and F1 score exhibit larger variability, particularly for $S=8$. 
        This is likely because a larger $S$ produces smaller subproblems and requires more merge operations, making the final solution more sensitive to random partitioning and variability in intermediate steps. Therefore, in practice, $S$ is typically chosen as the smallest value that is computationally feasible under the available resources.

        %\vspace{-0.5cm}
		\begin{figure}[H]
			\begin{center}
            \includegraphics[width=0.75\textwidth]{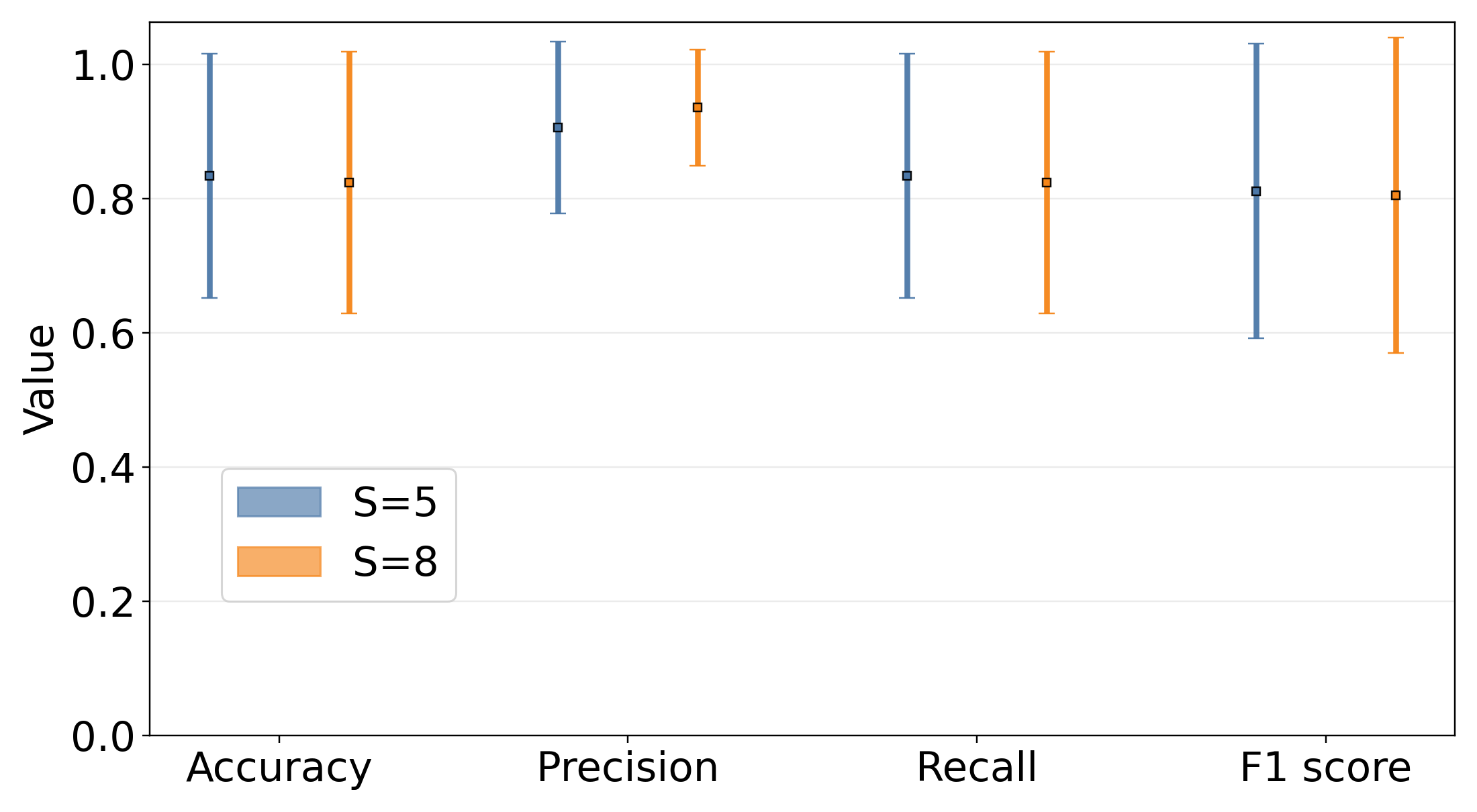}
                \caption{LCMC performance on the large-scale balanced dataset for different choices of the number of subproblems. For each metric, markers denote the mean over 60 independent random partitions, and error bars represent one standard deviation.}
				\label{fig:numerical_enhanced_balanced}
			\end{center}
		\end{figure}
		%\vspace{-0.5cm}
		
		\textbf{Imbalanced dataset.}
		The dataset comprises seven strain categories—\emph{BA.2}, \emph{AY.4}, \emph{KP.3.1.1}, \emph{XBB}, \emph{XEC}, \emph{JN.1}, and \emph{LB.1.17}—with the following number of strains: 2000, 1500, 1000, 1000, 1000, 300, and 100, respectively. 
        Fig.~\ref{fig:numerical_enhanced_imbalanced} presents the results on this imbalanced dataset.
        Compared to Fig.~\ref{fig:numerical_enhanced_balanced}, the mean performance remains comparable or slightly improved across all metrics, while the variability across runs is generally reduced.
        This suggests that the proposed method maintains stable and reliable performance even under data imbalance. Moreover, the results for $S=5$ and $S=8$ remain highly comparable, consistent with the observations in the balanced case.
	
		%\vspace{-0.5cm}
		\begin{figure}[H]
			\begin{center}
            \includegraphics[width=0.75\textwidth]{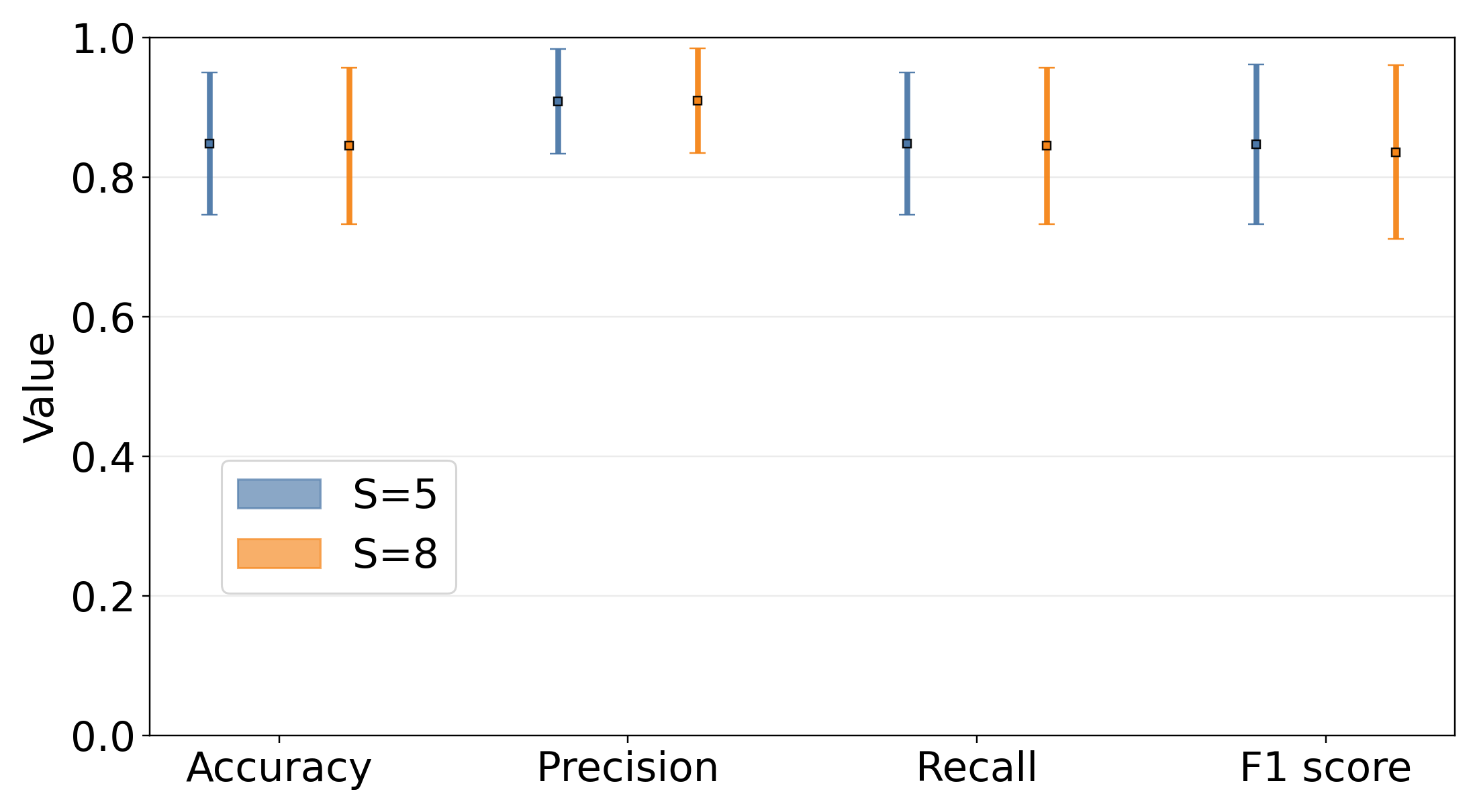}
                \caption{LCMC performance on the large-scale imbalanced dataset for different choices of the number of subproblems. For each metric, markers denote the mean over 60 independent random partitions, and error bars represent one standard deviation.}
				\label{fig:numerical_enhanced_imbalanced}
			\end{center}
		\end{figure}
		%\vspace{-0.5cm}
		
		\section{Conclusion}
		\label{sec: conclusion}
		We introduce LCMC, a double-loop optimization framework for categorical matrix completion with low latent dimension, which effectively addresses the challenges posed by discrete, non-ordinal data.
		Our approach incorporates a tensor-based representation to preserve categorical structure, along with split-merge-refine and data reduction techniques to ensure scalability and robustness in the presence of partial observations, unknown latent dimension, and imbalanced large-scale data. 
		The outer loop estimates the latent dimension via binary search using feedback from the inner loop, while the inner loop solves the tensor completion problem using a tailored factorization algorithm with a theoretical convergence analysis.	
		We validated the effectiveness of LCMC on both synthetic and real datasets, with a particular focus on the viral quasispecies reconstruction problem. Numerical experiments demonstrate that LCMC achieves better accuracy with improved efficiency compared to existing methods. 
		
		Our framework opens up new possibilities for structured learning on categorical data. Future work includes incorporating probabilistic modeling into the LCMC framework to better capture uncertainty in latent assignments and account for observational noise, particularly in biological or clinical applications. To further improve scalability, it is also important to explore online or distributed versions of LCMC that enable real-time analysis in huge-scale or streaming environments.

% BibTeX users please use one of
% \bibliographystyle{spbasic}      % basic style, author-year citations
\bibliographystyle{plain}      % mathematics and physical sciences
\bibliography{ref_LCMC}   % name your BibTeX data base

%% Non-BibTeX users please use
%\begin{thebibliography}{}
%%
%% and use \bibitem to create references. Consult the Instructions
%% for authors for reference list style.
%%
%\bibitem{RefJ}
%% Format for Journal Reference
%Author, Article title, Journal, Volume, page numbers (year)
%% Format for books
%\bibitem{RefB}
%Author, Book title, page numbers. Publisher, place (year)
%% etc
%\end{thebibliography}

\end{document}